\documentclass[12pt,a4paper]{amsart}
\usepackage{color}
\usepackage[utf8]{inputenc}
\usepackage[T1]{fontenc}
\usepackage{amsmath}
\usepackage{bbm}
\usepackage{amsfonts}
\usepackage{amssymb}
\usepackage{amsthm}
\usepackage{amsrefs}
\usepackage{sansmath}
\usepackage{mathrsfs}
\usepackage{extarrows}
\usepackage{tikz-cd}
\usepackage[all]{xy}
\usepackage{mathtools}
 \usepackage[english]{babel}

\theoremstyle{plain}% default
\newtheorem{thm}{Theorem}
\newtheorem*{thm*}{Theorem}
\theoremstyle{definition}
\newtheorem{defn}[thm]{Definition}
\newtheorem{lem}[thm]{Lemma}
\newtheorem{prop}[thm]{Proposition}
\newtheorem*{cor}{Corollary}

\newtheorem{exmp}[thm]{Example}

\theoremstyle{remark}
\newtheorem*{rem}{Remark}

\newcommand{\trace}{\operatorname{tr}}
\renewcommand{\dim}{\operatorname{dim}}
\newcommand{\id}{\operatorname{Id}}
\newcommand{\EN}{\Delta(\mathbb{N}_0)}

\newcommand{\cF}{\mathcal F}
\newcommand{\R}{{\mathbb R}}

\newcommand{\Z}{{\mathbb Z}}
\newcommand{\N}{{\mathbb N}}

\newcommand{\SC}{\mathcal{SC}_*}
\newcommand{\SCd}{\mathcal{SC}_{**}}
\newcommand{\rrsc}{\emph{rrsc}}
\newcommand{\alg}{\operatorname{Alg}}
\newcommand{\colim}{\underrightarrow{\mathrm{lim}}}
\newcommand{\stab}{\mathrm{Stab}}

\author[M. Schrödl-Baumann]{Michael Schrödl-Baumann}
\address{Institute for Algebra and Geometry \\ Karlsruhe Institute of Technology \\ Englerstr. 2 \\ 76128 Karlsruhe, Germany}
\email{michael.schroedl@kit.edu}
\title[$\ell^2$-Betti numbers of random rooted simplicial complexes]{$\ell^2$-Betti numbers of random rooted simplicial complexes}

\begin{document}

 \maketitle
 
\begin{abstract}
We define unimodular measures on the space of rooted simplicial complexes and associate to each measure a chain complex and a trace function. As a consequence, we can define $\ell^2$-Betti numbers of unimodular random rooted simplicial complexes and show that they are continuous under Benjamini-Schramm convergence.
\end{abstract}
\section*{introduction}
Benjamini and Schramm introduced unimodular random rooted graphs and the notion of local weak convergence in \cite{benjamini-schramm}, which has become a well-studied topology to investigate graphs or networks. Since in interacting complex systems not only interactions between two nodes, but also between more nodes occur, we generalize - building on a work of Elek \cite{elek} - a part of the extensive theory of graphs to simplicial complexes. \\
 A \emph{rooted simplicial complex} is a triple $(K,V(K),x)$ consisting of a simplicial complex $K$ on a vertex set $V(K)$ and a fixed $x\in V(K)$. Usually we omit $V(K)$ and just write $(K,x)$. A simplicial complex is \emph{locally finite} if every vertex is contained in only finitely many simplices. We say that two rooted simplicial complexes $(K,x)$, $(L,y)$ are isomorphic if there is a simplicial isomorphism $\Phi\colon K\to L$ such that $\Phi(x)=y$. Let $\mathcal{SC}_*$ be the space of \emph{isomorphism classes of connected locally finite rooted simplicial complexes} (see Section \ref{rrsc}). Bowen made use of this space in \cite{bowen} but similar ideas can already be found in \cite{elek}. \\ Every finite simplicial complex $K$ defines a \emph{random rooted simplicial complex} $\mu_K$, which is a probability measure on $\SC$, by choosing uniformly at random a vertex $x\in V(K)$ as root (Example \ref{example-measures}). A sequence $K_n$ of simplicial complexes \emph{converges weakly} or \emph{Benjamini-Schramm} if the measures $\mu_{K_n}$ weakly converge (Definition \ref{definition-BSconvergence}). The notion of weak convergence of graphs goes back to Benjamini and Schramm \cite{benjamini-schramm} - for simplicial complexes it was introduced by Elek in \cite{elek}. \\ We will give a definition of $\ell^2$-Betti numbers of unimodular random rooted simplicial complexes (Definition \ref{bettinumbers}) and provide a proof of the following result:
 \begin{thm*} Let $(\mu_n)_{n\in\N}$ be a sequence of sofic random rooted simplicial complexes with uniformly bounded vertex degree. If the sequence weakly converge to a random rooted simplicial complex $\mu$, then the $\ell^2$-Betti numbers of $(\mu_n)_{n\in\N}$ converge to the $\ell^2$-Betti numbers of $\mu$.
\end{thm*}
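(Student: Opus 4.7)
My approach is to reformulate each $\ell^2$-Betti number as the value at $0$ of the spectral density function of a combinatorial Laplacian, and then pair an upper semicontinuity bound (from Benjamini--Schramm convergence via continuous functional calculus) with a lower semicontinuity bound (from a L\"uck--Schmidt type determinant estimate that crucially uses soficity). I would first write $\betti^{k}(\mu)=F_\mu(0)$, where
\[
F_\mu(\lambda):=\tau_\mu\bigl(\chi_{[0,\lambda]}(\Delta_k)\bigr)
\]
is the spectral distribution of the $k$-th combinatorial Laplacian $\Delta_k$ acting on the $\ell^2$-chains determined by $\mu$, and $\tau_\mu$ is the trace from Definition~\ref{bettinumbers}. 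The uniform vertex-degree bound $d$ gives $\|\Delta_k\|\le D(d)$ uniformly, so every such $F_\mu$ is right-continuous, non-decreasing, and supported on a fixed compact interval $[0,D]$. For a polynomial $p$, the matrix coefficient $\langle p(\Delta_k)\delta_x,\delta_x\rangle$ depends only on a combinatorial ball of radius $\le\deg p$ around the root, hence defines a bounded continuous function on $\SC$, and weak convergence $\mu_n\to\mu$ yields $\tau_{\mu_n}(p(\Delta_k))\to\tau_\mu(p(\Delta_k))$. A Stone--Weierstrass approximation on $[0,D]$ extends this to every continuous $f\colon[0,D]\to\R$.

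The upper-semicontinuity direction is then immediate. Choosing continuous functions $f_\varepsilon\ge\chi_{\{0\}}$ on $[0,D]$ with $f_\varepsilon\downarrow\chi_{\{0\}}$, for every fixed $\varepsilon>0$ we have
\[
\limsup_{n\to\infty}\betti^k(\mu_n)\le\limsup_{n\to\infty}\tau_{\mu_n}(f_\varepsilon(\Delta_k))=\tau_\mu(f_\varepsilon(\Delta_k)),
\]
and letting $\varepsilon\to 0$ combined with right-continuity of $F_\mu$ at $0$ gives $\limsup_n\betti^k(\mu_n)\le\betti^k(\mu)$.

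The main obstacle is the matching inequality $\liminf_n\betti^k(\mu_n)\ge\betti^k(\mu)$, which could fail if the spectra of $\Delta_k$ under $\mu_n$ accumulated just above $0$ while hiding mass of $F_\mu$ at $0$. This is exactly where soficity becomes indispensable. Because each $\mu_n$ is sofic with bounded degree, it is a weak limit of uniform root measures $\mu_{K_{n,m}}$ on finite simplicial complexes, whose integer combinatorial Laplacians have norm at most $D$. A L\"uck-style approximation argument therefore pins the Fuglede--Kadison determinant of $\Delta_k$ under $\tau_{\mu_n}$ at $\ge 1$, which by integration by parts translates into a uniform logarithmic estimate
\[
F_{\mu_n}(\lambda)-\betti^k(\mu_n)\le\frac{C}{|\log\lambda|},\qquad 0<\lambda\le\tfrac{1}{2},
\]
with $C=C(d,D)$ independent of $n$; the same bound holds for $\mu$, since soficity passes to weak limits. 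Combining with $F_{\mu_n}(\lambda)\to F_\mu(\lambda)$ at continuity points of $F_\mu$ yields
\[
\liminf_{n\to\infty}\betti^k(\mu_n)\ge F_\mu(\lambda)-\tfrac{C}{|\log\lambda|}\ge\betti^k(\mu)-\tfrac{C}{|\log\lambda|},
\]
and letting $\lambda\to 0^+$ along continuity points concludes. The technical crux is transferring L\"uck's determinant bound from finite complexes to the sofic unimodular setting uniformly in $n$; that is the step where I expect to spend the most effort.
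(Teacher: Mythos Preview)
Your proposal is correct and follows essentially the same strategy as the paper. Both arguments (i) use locality of $\langle \Delta_p^r\tau,\tau\rangle$ on $r$-balls together with Stone--Weierstra\ss{} to pass from weak convergence of the $\mu_n$ to weak convergence of the spectral measures, (ii) exploit that the Laplacian on a finite complex is an integer matrix, so the product of its nonzero eigenvalues has absolute value at least $1$, to obtain a uniform logarithmic bound on the spectral mass near $0$, and (iii) combine an upper semicontinuity inequality (Portmanteau for closed sets, or equivalently your $f_\varepsilon\downarrow\chi_{\{0\}}$) with a lower one coming from this logarithmic bound applied to the sofic $\mu_n$. Your phrasing via the Fuglede--Kadison determinant and integration by parts is the standard functional-analytic packaging of exactly the eigenvalue-counting step the paper carries out directly; in particular your constant $C/|\log\lambda|$ is the paper's $\frac{\ln R(D)\binom{D}{p}}{(p+1)\ln(1/\epsilon)}$. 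One small remark: you do not actually need the bound for the limit $\mu$ itself, only for the $\mu_n$; the inequality $\liminf_n\betti^k(\mu_n)\ge F_\mu(\lambda)-C/|\log\lambda|$ already follows from the bound for $\mu_n$ together with $F_{\mu_n}(\lambda)\to F_\mu(\lambda)$ at continuity points.
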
 
As a consequence we get that the limit of normalized Betti numbers of a Benjamini-Schramm convergent sequence $K_n$ of finite simplicial complexes of uniformly bounded degree exists, which is a result of Elek \cite{elek}, and the limit is the $\ell^2$-Betti number of the limit measure.  \\
This result generalizes Lück's approximation Theorem \cite{lueck}, which says in its classical version that for a free $\Gamma$-CW-complex $X$, with $\Gamma\backslash X$ of finite type, 
\[\lim_{i\to\infty}\frac{b_p(\Gamma_i\backslash X;\mathbb{Q})}{[\Gamma:\Gamma_i]}=\beta_p^{(2)}(X,\Gamma)\]
holds for all finite index normal towers $(\Gamma_i)_{i\in\N}$. Farber \cite{farber} extended this to so called Farber sequences, which weakens the assumption that $(\Gamma_i)_{i\in\N}$ is normal. Elek and Szab\'o \cite{elek-szabo} further extended this result to normal towers $(\Gamma_i)_{i\in\N}$ such that $\Gamma/\Gamma_i$ is sofic for every $i\in \N$.
In \cite{7s} Abert, Bergeron, Biringer, Gelander, Nikolov, Raimbault and Samet proved an approximation result for Benjamini-Schramm convergent sequences of orbifolds which arise from a uniformly discrete sequence of lattices in a semi-simple Lie group. Petersen, Sauer and Thom generalized the notion of Farber sequences to lattices in a totally disconnected, second countable unimodular group $G$, which is also a instance of local weak convergence and proved that the normalized Betti numbers of a uniformly discrete Farber sequence converge to the $\ell^2$-Betti number of the group $G$ \cite{petersen-sauer-thom}. Recently Carderi, Gaboriau and de la Salle proved a theorem for graphed groupoids and ultra products of them, which for instance implies the convergence for Farber sequences \cite{carderi}. 
\section{random rooted simplicial complexes}\label{rrsc}
Let $\SC$ be the space of isomorphism classes of connected locally finite rooted simplicial complexes. We denote an element of $\SC$ with representative $(K,x)$ by $[K,x]$. We write $B_{(K,x)}(r)$ for the closed ball of radius $r$ centred at $x$ in $K$, i.e. the rooted subcomplex of $K$ consisting of all simplices of $K$ such that the graph distance of all their vertices to the root is at most $r$. We denote the subset of $\SC$ of all possible rooted isomorphism classes of $r$-balls by $\mathcal{B}(r)$. A metric $d$ on $\SC$ is given by 
\[d([K,x],[L,y])=\inf_{r}\left\lbrace \frac{1}{2^r}\;\middle|\;B_{(K,x)}(r)\cong B_{(L,y)}(r)\right\rbrace\]
and we define the $r$-neighbourhood of $[K,x]$ by 
\begin{align*}
U_r(K,x):&=\lbrace[L,y]\in\SC\;|\; B_{(L,y)}(r)\cong B_{(K,x)}(r)\rbrace \\
&=\Big\lbrace[L,y]\in\SC\;|\; d([L,y],[K,x])\le \frac{1}{2^r}\Big\rbrace.
\end{align*}
With the $r$-neighbourhoods of finite simplicial complexes as basis, $\SC$ becomes a Polish space. The isomorphism classes of finite rooted simplicial complexes are a dense and countable subset of $\SC$. Moreover, the subset $\SC^d$ of isomorphism classes of rooted simplicial complexes with vertex degree bounded by $d\in\N$ is a compact subspace of $\SC$ \cite{bowen}.\\
We call a probability measure on $\SC$ a \emph{random rooted simplicial complex} (\rrsc).\\
Let $\SCd$ be the space of isomorphism classes of connected locally finite doubly rooted simplicial complexes. An element $[K,x,y]$ of $\SCd$ consists of a connected simplicial complex $K$ and an ordered pair of vertices $x,y\in V(K)$. The topology is given by
\begin{align*}
U_r([K,x,y])\coloneqq\left\{[L,p,q]\in\SCd\;\middle|\begin{array}{ll}
(B_{(K,x)}(r),x)\cong (B_{(L,p)}(r),p) \text{ and}\\
(B_{(K,y)}(r),y)\cong (B_{(L,q)}(r),q)\end{array}\right\}.
\end{align*}
 An important class of random rooted simplicial complexes are the \emph{unimodular} ones:
\begin{defn}\label{definition unimodular}
A probability measure $\mu$ on $\SC$ is called \emph{unimodular} if
\[\int_{\SC}\sum_{y\in V(K)}f([K,x,y])d\mu([K,x])=\int_{\SC}\sum_{x\in V(K)}f([K,x,y])d\mu([K,y])\]
for all Borel measurable functions $f\colon\SCd\to \mathbb{R}_{\ge 0}$.
\end{defn}
We define the \emph{maximum degree} of an element $[K,x]\in\SC$ by 
\[\mathrm{deg}_{max}([K,x])=\sup_{x\in V(K)}\{\deg(x)\},\]
where $\deg(x)$ denotes the vertex degree, i.e. the number of 1-simplices of $K$ containing $x$. More generally, we define the $p$-degree $\deg_p(x)$ of $x$ as the number of $p$-simplices of $K$ containing $x$. If the $p$-degree is bounded then also the $(p+1)$-degree is. We will use the notation $K(p)$ for the set of $p$-simplices of $K$.
\begin{defn}
The \emph{degree} $\deg(\mu)$ of an random rooted simplicial complex $\mu$ on $\SC$ is the $\mu$-essential supremum of the maximum degree:
\begin{align*}
&\deg(\mu)=\inf\Big\lbrace d\in\mathbb{R}\;|\; \mu\big\{[K,x]\in\SC\;|\;\mathrm{deg}_{max}([K,x]) >d\big\}=0\Big\rbrace.
\end{align*}
We say a measure $\mu$ is of \emph{bounded degree} if $\deg(\mu)<\infty$.\\
For comparison, the \emph{expected p-degree}  of $\mu$ is 
\begin{align*}
\mathbb{E}(\deg_p)=\int_{\SC}\deg_p(x)d\mu.
\end{align*}
\end{defn}
Now we will give some examples of (unimodular) random rooted simplicial complexes:
\begin{exmp}\label{example-measures}
\begin{enumerate}
\item Every finite connected simplicial complex $K$ defines an unimodular \rrsc\, of bounded degree by:
\begin{align*}
\mu_K:=\sum_{x\in V(K)}\frac{\delta_{[K,x]}}{| V(K)|} ,
\end{align*}
where $\delta_{[K,x]}$ denotes the Dirac measure of the point $[K,x]$ in $\SC$. If $K$ has more than one connected component, we define the associated unimodular \rrsc\, by:
\begin{align*}
\mu_K:=\sum_{x\in V(K)}\frac{\delta_{[K_x,x]}}{| V(K)|}, 
\end{align*}
with $K_x$ the connected component of $x\in V(K)$.
\item There is a more general construction, which also applies to infinite complexes. Let $K$ be a connected locally finite simplicial complex with unimodular automorphism group $\mathrm{Aut}(K)$. Further, let $\{x_1,x_2,...\}$ be a complete orbit section of $V(K)$. Then there exist an unimodular probability measure $\mu_K$ on $\SC$, fully supported on the rooted isomorphism classes of $K$, if
\begin{align*}
c\coloneqq\sum_i \frac{1}{|\mathrm{Stab}(x_i)|}<\infty,
\end{align*}
where $|\cdot|$ denotes the Haar measure.
The unimodular \rrsc\, is given by:
\begin{align*}
\mu_{K}:=\frac{1}{c}\sum_{i}\frac{\delta_{[K,x_i]}}{|\mathrm{Stab}(x_i)|}.
\end{align*}
In \cite[Theorem 3.1]{aldous-lyons} Aldous and Lyons give a proof of this statement for graphs, which also applies to simplicial complexes. They proved even more: The measure $\mu_G$ associated with a graph $G$ is unimodular if and only if the automorphism group $\mathrm{Aut}(G)$ is unimodular. An important special case of this is the following. Let $K$ be as above with a proper, admissible (i.e. the stabilizer of a simplex is the intersection of the stabilizers of its vertices) and cocompact action of a discrete normal subgroup $\Gamma\unlhd \mathrm{Aut}(K)=:G$. Let $\cF$ be a fundamental domain for the $\Gamma$ action and $\{x_1,x_2,...\}$ a complete orbit section for $G$ in $V(K)$. Remark that $G/\Gamma$ is also discrete since $\cF$ is finite. Subsequently, we have the following formula for the \rrsc:
\begin{align*}
\mu_{K}&:=\frac{1}{c}\sum_i \frac{\delta_{[K,x_i]}}{|\mathrm{Stab}_G(x_i)|}=\frac{1}{c}\sum_i \sum_{\substack{g\in G\\ gx\in V(\cF)}}\frac{\delta_{[K,gx_i]}|\mathrm{Stab}_{G/\Gamma}(\Gamma x_i)|}{|\mathrm{Stab}_G(x_i)||G/\Gamma|}\\
&=\frac{1}{c}\sum_{y\in V(\cF)}\frac{\delta_{[K,y]}}{|G/\Gamma| |\mathrm{Stab}_{\Gamma}(y)|}
\end{align*}
And in a similar way we obtain that 
\[c=\sum_{y\in V(\cF)} \frac{|G/\Gamma|}{|\mathrm{Stab}_\Gamma(y)|},\]
hence, with $c(\Gamma)=\sum_{y\in V(\cF)} (|\mathrm{Stab}_\Gamma(y)|)^{-1}$, we deduce that
\begin{align*}
\mu_K:=\frac{1}{c(\Gamma)}\sum _{y\in V(\cF)}\frac{\delta_{[K,y]}}{|\mathrm{Stab}_{\Gamma}(y)|}.
\end{align*}
This shows that $\mu_K$ does not depend on the group or the choice of the fundamental domain. Note that if the $\Gamma$-action is free, $\mu_K$ has the following appearance:
\[\mu_K= \sum _{y\in V(\cF)}\frac{\delta_{[K,y]}}{|V(\cF)|}.\]
\item Probability measure preserving equivalence relations $\mathcal{R}\subset X\times X$ on a standard Borel space $(X,\nu)$ also give rise to random rooted simplicial complexes. Let $\Sigma$ be an $\mathcal{R}$-simplicial complex as introduced by Gaboriau \cite{gaboriau}, i.e. a measurable assignment $X\ni x\mapsto\Sigma_x$ of simplicial complexes together with an $\mathcal{R}$-action: For every $(x,y)\in\mathcal{R}$ there is a simplicial isomorphism $\psi_{x,y}\colon\Sigma_x\to \Sigma_y$ such that $\psi_{x,y}\circ \psi_{y,z}=\psi_{x,z}$ and $\psi_{x,x}=\id_{\Sigma_x}$. Further assume there is a Borel fundamental domain $F$ for the action on the vertices with a countable partition $F=\bigsqcup_{j\in J}F_j$ such that the projection $\pi\colon F\to X$ is injective on each $F_j$ and $\sum_J \nu(\pi(F_j))<\infty$. Then this defines a unimodular \rrsc :
\begin{align*}
\mu_{\Sigma,\mathcal{R}}(A)\coloneqq\frac{\sum_{j\in J}\nu\left(\pi(\{f\in F_j\;|\;[\Sigma_{\pi(f)},f]\in A\})\right)}{\sum_{j\in J}\nu(\pi (F_j))},
\end{align*}
for all Borel sets $A$.
A special case of this is a field of graphs $x\mapsto \Phi_x$ defined by a graphing $\Phi$ of $\mathcal{R}$. In this case the fundamental domain is just the diagonal $(x,x)\in \mathcal{R}$ and the random rooted graph is given by
\begin{align*}
\mu_{\Phi,\mathcal{R}}(A)=\nu\left(\{x\in X\;|\; [\Phi(x),x]\in A\}\right).
\end{align*}
Random rooted graphs defined by graphings were also treated in \cite{aldous-lyons}. For more information about measured equivalence relations see \cite{feldman-mooreI, feldman-mooreII}.
\item The different models of random simplicial complexes, like the Erdös-Rényi model $G(n,p)$, the random $d$-complex $Y_d(n,p)$ of Linial and Meshulam \cite{linial-meshulam}, random flag complexes $X(n,p)$ \cite{kahle} or random geometric complexes (e.g. the random geometric graph of Penrose \cite{penrose}) give rise to unimodular measures on $\SC$ by applying the construction of Example \ref{example-measures}.1. to a random sample.
% For example 
%\[\mu_{G(n,p)}=\sum_{G(n,p)(K)>0}G(n,p)(K)\cdot \mu_K,\]
%where $G(n,p)(K)$ is the probability that a sample of $G(n,p)$ is isomorphic to $K$.
\end{enumerate}
\end{exmp}

\section{The $\ell^2$-chain complex of $\SC$}
In this section we will define the simplicial $\ell^2$-chain complex of $\SC$. For this, we need to choose a representative for each isomorphism class in a measurable way. This was already done by Aldous and Lyons \cite{aldous-lyons} for rooted networks. Our construction is based on their ideas. We denote the infinite dimensional simplicial complex consisting of all finite subsets of $\mathbb{N}_0$ with $\Delta (\mathbb{N}_0)$. Considered as set, this is just $\mathcal{P}_{<\infty}(\mathbb{N}_0)$. 

Since the space of finite subsets of $\mathbb{N}_0$ is countable, every simplex of $\Delta(\mathbb{N}_0)$ can be identified with a natural number. We do this in the following way (diagonal argument): 
\begin{align*}
\left(\begin{array}{cccccccccc}
\N_0& 0&1&2&3&4&5&6&7&... \\
\EN&  \{0\}&\{1\}&\{0,1\}&\{2\}&\{0,2\}&\{1,2\}&\{0,1,2\}&\{3\}&...
\end{array}\right).
\end{align*}
and denote this map by $\Upsilon\colon \N_0\to \EN$. Subsequently all subsets of $\{0,1,2,...,n\}$ show up in the first $2^{n+1}$ entries. 
Hence, we can identify \emph{the space of locally finite connected subcomplexes of }$\Delta(\mathbb{N}_0)$ with a subset of the space of sequences $\{0,1\}^{\mathbb{N}_0}$,  where $0$ at the n-th position means that the simplex $\Upsilon(n)$ of $\EN$ is not contained in the subcomplex. We give  the space of \emph{locally finite connected simplicial subcomplexes of }$\Delta(\mathbb{N}_0)$ the subspace topology with respect to the product topology on $\{0,1\}^{\mathbb{N}_0}$, i.e. open sets are generated by cylinder sets. We have the following Lemma:

\begin{lem}\label{lem-naturalcomplex} There is a continuous map 
\begin{align*}
\Psi\colon\SC&\longrightarrow \{\text{subspaces of } \Delta(\N_0) \}\subseteq\{0,1\}^{\N_0},\\
[K,x]&\longmapsto (K,x)_\mathbb{N}
\end{align*}
such that $[(K,x)_\N,0]=[K,x]$.
\end{lem}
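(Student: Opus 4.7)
The plan is to construct, for every $[K,x] \in \SC$, a canonical injective simplicial map $\phi_{K,x}\colon K \hookrightarrow \EN$ with $\phi_{K,x}(x) = 0$ in such a way that (i) $\phi_{K,x}$ depends only on the rooted isomorphism class, and (ii) $\phi_{K,x}$ and $\phi_{L,y}$ agree on balls of radius $r-1$ whenever the rooted $r$-balls of $(K,x)$ and $(L,y)$ are isomorphic. I then set $\Psi([K,x]) := (K,x)_\N$ to be the image subcomplex $\phi_{K,x}(K) \subseteq \EN$, encoded as an element of $\{0,1\}^{\N_0}$ via $\Upsilon$. The map $\phi_{K,x}$ comes from a canonical breadth-first labelling of $V(K)$ starting at the root, with any tiebreakers arising along the way resolved using the ambient enumeration $\Upsilon$ of $\EN$.

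Concretely, I put $\phi_{K,x}(x) = 0$ and, assuming labels $0,\dots,n-1$ have been assigned to vertices $v_0,\dots,v_{n-1}$, I select the next vertex as follows. Consider the finite (non-empty, by local finiteness and connectedness) set $E_n$ of unlabelled vertices sharing a simplex with some labelled vertex. To each $v \in E_n$ associate the code
\[
c(v) := \{\,\Upsilon^{-1}(\sigma')\,:\, \sigma \in K,\; v \in \sigma,\; \sigma \setminus \{v\} \subseteq \{v_0,\dots,v_{n-1}\}\,\} \subseteq \N_0,
\]
where $\sigma'$ denotes the subset of $\N_0$ obtained from $\sigma$ by giving $v$ the tentative label $n$ and using the existing labels on the other vertices. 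I pick $v_n$ minimising $c(v)$ in the lexicographic order on finite subsets of $\N_0$. Whenever several candidates are tied, the tie is refined by incorporating the analogous code computed from labelled neighbourhoods of increasing radius; any ambiguity that persists at every radius corresponds to an automorphism of $(K,x)$ fixing all already-labelled vertices, so it may be resolved arbitrarily without affecting the image subcomplex in $\EN$.

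Once $\phi_{K,x}$ is constructed, the conclusions of the lemma follow directly. Well-definedness on $\SC$ holds because any rooted simplicial isomorphism $\Phi\colon(K,x)\to(L,y)$ commutes step by step with the BFS procedure, hence $\phi_{L,y}\circ\Phi = \phi_{K,x}$ and the two subcomplexes of $\EN$ coincide. The identity $[(K,x)_\N,0] = [K,x]$ is then automatic, since $\phi_{K,x}$ is a simplicial isomorphism onto its image sending $x$ to $0$. For continuity, the label assigned at step $n$ depends only on the simplices of $K$ whose vertices are all at distance at most one from the already-labelled set, so once $B_{(K,x)}(r) \cong B_{(L,y)}(r)$ the BFS labellings agree on the entire $r$-balls; consequently the $\{0,1\}^{\N_0}$-encodings of $(K,x)_\N$ and $(L,y)_\N$ coincide on every coordinate $n$ for which $\Upsilon(n)$ is contained in the label set of the $(r-1)$-ball, which drives $d(\Psi[K,x],\Psi[L,y])\to 0$ as $r\to\infty$ in the product topology.

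The main obstacle is making the tiebreaking canonical: the bare BFS algorithm can leave genuine ambiguity when $(K,x)$ has non-trivial automorphisms fixing the already-labelled part. The resolution is the observation that such ambiguity is precisely automorphism-induced, so any of the equally-optimal choices produces different vertex labellings of the \emph{same} abstract rooted complex, hence the same subcomplex of $\EN$. Outside of automorphism-induced ties, the choice at step $n$ is determined by the isomorphism type of a sufficiently large finite ball around the root, which is exactly what is needed for the map $\Psi$ to be continuous with respect to the product topology on $\{0,1\}^{\N_0}$.
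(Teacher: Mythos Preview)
Your approach differs from the paper's. The paper defines $(K,x)_\N$ \emph{globally} as the lexicographically minimal element of $\{0,1\}^{\N_0}$ among all subcomplexes of $\EN$ rooted at $0$ and isomorphic to $(K,x)$; it then proves the compatibility $\Psi(B_{(K,x)}(r))=B_r\bigl(\Psi(B_{(K,x)}(r+1))\bigr)$, from which both existence of the minimum for infinite complexes and continuity follow directly. You instead build the labelling greedily, one vertex at a time.

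The main gap is the tension between your tiebreaking mechanism and your continuity argument. You write that ``the label assigned at step $n$ depends only on the simplices of $K$ whose vertices are all at distance at most one from the already-labelled set'', but this is only true of the code $c(v)$, not of the actual choice: you explicitly allow tiebreaking to inspect ``labelled neighbourhoods of increasing radius'', so the vertex receiving label $n$ can depend on arbitrarily distant structure. Consequently the assertion that ``once $B_{(K,x)}(r)\cong B_{(L,y)}(r)$ the BFS labellings agree on the entire $r$-balls'' is not justified by what precedes it. If a tie at some early step is only resolved beyond radius $r$ in $K$ but is resolved differently in $L$, the two labellings of the $r$-balls need not match vertex-by-vertex, and you have not shown that the \emph{images} in $\EN$ nonetheless coincide.

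Your proposed fix---that unresolved ties are automorphism-induced and hence irrelevant to the image---needs two nontrivial ingredients you have not supplied. First, promoting ``tie persists at every finite radius'' to ``there exists a global automorphism of $K$ fixing $v_0,\dots,v_{n-1}$ and swapping the tied candidates'' requires a compactness argument (K\H{o}nig's lemma for the locally finite automorphism group), and also requires your tiebreaking code to be fine enough that a persistent tie at radius $r$ genuinely forces an automorphism of the $r$-ball. Second, even granting the automorphism at step $n$, you must show that \emph{every} sequence of subsequent arbitrary tie-resolutions yields the same image subcomplex; since later ties can interact with earlier choices, this is not automatic. The cleanest way to repair the argument is to prove that your greedy procedure, with a precisely specified lexicographic tiebreak, actually produces the global lex-minimum of the paper---after which the paper's ball-compatibility lemma does the rest.
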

\begin{proof}
Let $[K,x]\in\SC$. That there is a subcomplex $\Lambda$ of $\EN$ with $(\Lambda, 0)\in[K,x]$ is obvious, we obtain such a subcomplex by enumerating the vertices of $K$ in an arbitrary way and label the root with $0$. Thus, we restrict to this kind of subcomplexes.  Given two such  subcomplexes $\Lambda,\Xi\in\{0,1\}^{\N_0}$. Then $\Lambda<\Xi$ if, considered as sequences, $\Lambda<_{\mathrm{lex}}\Xi$ with respect to the lexicographic order, i.e. we look at the first position where the sequences differ, if $\Lambda$ has a 1 there, we say $\Lambda<\Xi$.  $\Psi$ maps $[K,x]$ to the smallest complex $(K,x)_\mathbb{N}\in\{0,1\}^{\N_0}$ with respect to this order.\vspace*{0.1cm}\\
\textbf{Claim:} There exists a unique minimal subcomplex $(K,x)_\N\subset \EN$ for every $[K,x]\in\SC$.\\
 For a finite simplicial complex it follows from the well-ordering principle that a unique minimal subcomplex of $\EN$ exist. Let $[K,x]$ by the rooted isomorphism class of an infinite simplicial complex $K$ and consider the $r$-ball $B_{(K,x)}(r)$ around $x$ in $K$. We claim that $\Psi(B_{(K,x)}(r))=B_{\Psi(B_{(K,x)}(r+1))}(r)$ and hence the minimal element $(K,x)_\N$ is given by $\colim_{r}\Psi(B_{(K,x)}(r))$.\\
First remark that by the definition of the order, the set of vertices must be the interval $[0,1,...,N_r]\subset\N$, where $N_r:=|V(B_{(K,x)}(r))|$, for both of the subcomplexes. For $\Psi(B_{(K,x)}(r))$ this is obvious. For $B_{\Psi(B_{(K,x)}(r+1))}(r)$ we will show it by induction on the second $r$. So suppose that the vertices of $B_{\Psi(B_{(K,x)}(r+1))}(r-1)$ are the interval $[0,...,N_{r-1}]$ but on the other hand there is an $k\in [N_{r-1}+1,...,N_r]$ which is not contained in $B_{\Psi(B_{(K,x)}(r+1))}(r)$, hence $k$ must be at distance $r+1$ to the root in $\Psi(B_{(K,x)}(r+1))$. Therefore, $B_{\Psi(B_{(K,x)}(r+1))}(r)$ must contain another vertex $l\in[N_r+1,..., N_{r+1}]$. But then all simplices of the form $s\cup k  \in \EN$, with $s\subset\{0,...,N_{r-1}\}$, are not contained in $\Psi(B_{(K,x)}(r+1))$. This is a contradiction to the minimality of $\Psi(B_{(K,x)}(r+1))$, since by interchanging the roles of $k$ and $l$ we would get a smaller complex for $B_{(K,x)}(r+1)$. After this consideration and the definition of the order it is clear that $\Psi(B_{(K,x)}(r))=B_{\Psi(B_{(K,x)}(r+1))}(r)$.\vspace*{0.1cm}\\
\textbf{Claim:} $\Psi$ is continuous.\\
Let $[k]\subset\{0,1\}^{\N_0}$ be the cylinder set with a $1$ at the $k$-th position and let $\Upsilon(k)$ the simplex $\{k_0,...,k_n\}\in\EN)(n)$ with $k_0<k_1...<k_n$. The distance of $k_0$  to $0$ is at most $k_0$ for every minimal subcomplex of $\EN$ which contains $\Upsilon(k)$. Consequently, if the image of a complex contains $\Upsilon(k)$, then it is already in the image of the $(k_0+1)$-ball and hence the preimage consists of all $k_0+1$-neighbourhoods of balls with $\Upsilon(k)$ in their image i.e.
\[\Psi^{-1}([k])=\bigcup_{\substack{[L,y]\in \mathcal{B}(k_0+1) \\
\Upsilon(k)\in \Psi ([L,y])}} U_{k_0+1}(L,y).\]
This is a countable union of open sets, therefore $\Psi$ is continuous.
\end{proof}

\begin{defn}[simplicial $\ell^2$-chains of $\SC$]
Let $\mu$ be a random rooted simplicial complex. We define the $p$-th \emph{simplicial $\ell^2$-chain module of $\SC$ with respect to $\mu$} as the direct integral Hilbert space
\[C_{p}^{(2)}(\SC,\mu)\coloneqq\int_{\SC}^\oplus C_p^{(2)}((K,x)_\mathbb{N})d\mu([K,x]),\]
where $C_p^{(2)}((K,x)_\mathbb{N})$ denotes the space of complex or real valued square-summable simplicial $p$-chains of the simplicial complex $(K,x)_\mathbb{N}$.
\end{defn}
The space $C_p^{(2)}(\SC,\mu)$ consists of $\mu$-equivalence classes of $\mu$-measurable functions $f$ on $\SC$ such that $f([K,x])\in C^{(2)}_p((K,x)_\mathbb{N})$ almost everywhere and $\|f\|\in L^2(\SC,\mu)$.
%\[\|f\|^2:=\int_{\SC} \| f([K,x])\|^2d\mu([K,x])< \infty.\]
We call a function $f$ \emph{measurable} if 
\[[K,x]\mapsto \langle f([K,x]), \delta_\sigma ([K,x]) \rangle\]
is measurable for all oriented $p$-simplices $\sigma$ of $\EN(p)$, where
\begin{align*}
\delta_\sigma([K,x])=\left\{\begin{array}{ll}
\sigma & \text{if }\sigma\in (K,x)_\N \\
0 & \text{otherwise.}
\end{array}\right.
\end{align*} 
The set $\{\delta_\sigma\;|\; \sigma \in \EN(p)\}$ is a so called \emph{fundamental sequence for}  $C_p^{(2)}(\SC,\mu)$.
The inner product is given by
\[\langle f,g\rangle_\mu =\int_{\SC} \langle f([K,x]),g([K,x])\rangle_{(K,x)_\N} d\mu([K,x]).\]

\vspace*{0.5cm}
Let $T_{[K,x]}$ be a continuous linear mapping of $C_p^{(2)}((K,x)_\N)$ to $C_q^{(2)}((K,x)_\N)$ for every $[K,x]\in\SC$. We call such a field of linear maps $[K,x]\mapsto T_{[K,x]}$ \emph{measurable} if
\[[K,x]\mapsto \langle T_{[K,x]}\delta_\sigma,\delta_\theta \rangle\]
is measurable for every $\sigma\in \EN(p)$ and $\theta\in\EN(q)$.
\begin{defn}[Decomposable operators]
We call an operator 
\[T\colon C_p^{(2)}(\SC,\mu)\to C_q^{(2)}(\SC,\mu)\] \emph{decomposable} if there is a measurable field $[K,x]\to T_{[K,x]}$ of operators, such that for every element $f\in C_p^{(2)}(\SC,\mu)$ in the domain of $T$ and every $[K,x]\in \SC$, $f([K,x])$ is in the domain of $T_{[K,x]}$, $[K,x]\mapsto \|T_{[K,x]}f([K,x])\|$ is square integrable and 
\[(Tf)([K,x])=T_{[K,x]}f([K,x])\]
\end{defn}
 for $\mu$ almost all $[K,x]\in\SC$.\\
 $T$ is closed (resp. densely defined, self-adjoint) if and only $T_{[K,x]}$ is closed (resp densely defined, self-adjoint) $\mu$ almost everywhere \cite[Theorem 3]{nussbaum}.
\begin{prop}\cite[II.3]{dixmier}
Let $[K,x]\to T_{[K,x]}$ be a measurable field of operators on $\SC$. If the $\mu$-essential supremum $D$ of $[K,x]\mapsto \|T_{[K,x]}\|$ is finite, we call $[K,x]\to T_{[K,x]}$ \emph{$\mu$-essentially bounded}  and the field defines a bounded operator $T$ on $C_p^{(2)}(\SC,\mu)$ with
\[\|T\|=D.\]
\end{prop}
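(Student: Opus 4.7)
The plan is to follow the standard direct integral argument (as in Dixmier II.3). I would define $T$ pointwise by $(Tf)([K,x]) \coloneqq T_{[K,x]} f([K,x])$, then verify in three stages: (i) $T$ is a well-defined linear map from $C_p^{(2)}(\SC,\mu)$ to $C_q^{(2)}(\SC,\mu)$, (ii) $\|T\| \leq D$, and (iii) $\|T\| \geq D$. The inequality (ii) comes essentially for free from the pointwise bound; the interesting content is (i), which requires a measurability check, and (iii), which needs a localization argument.

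For well-definedness, I would first show that the pointwise image is measurable. Take $f \in C_p^{(2)}(\SC,\mu)$. Since the fundamental sequence $\{\delta_\sigma\}_{\sigma \in \EN(p)}$ spans $C_p^{(2)}((K,x)_{\mathbb N})$ fibrewise and $\langle T_{[K,x]} \delta_\sigma, \delta_\theta \rangle$ is measurable for each $\sigma, \theta$, the inner product $[K,x]\mapsto \langle T_{[K,x]} f([K,x]),\delta_\theta \rangle$ can be expressed as an $\ell^2$-convergent series of measurable functions, hence is measurable for every $\theta \in \EN(q)$. So $[K,x] \mapsto T_{[K,x]}f([K,x])$ is measurable in the sense required. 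Square-integrability then follows from the pointwise estimate
\[\|T_{[K,x]} f([K,x])\|^2 \leq \|T_{[K,x]}\|^2 \,\|f([K,x])\|^2 \leq D^2 \|f([K,x])\|^2\]
which holds for $\mu$-almost every $[K,x]$. Integrating gives $\|Tf\|_\mu^2 \leq D^2 \|f\|_\mu^2$, establishing both that $T$ takes values in $C_q^{(2)}(\SC,\mu)$ and that $\|T\| \leq D$.

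For the reverse inequality $\|T\| \geq D$, I would argue by localization. For every $\epsilon > 0$ the set
\[A_\epsilon \coloneqq \{[K,x] \in \SC \mid \|T_{[K,x]}\| > D - \epsilon \}\]
has strictly positive $\mu$-measure, by the definition of essential supremum. On $A_\epsilon$ I want to choose, in a measurable way, unit vectors $v([K,x]) \in C_p^{(2)}((K,x)_{\mathbb N})$ with $\|T_{[K,x]} v([K,x])\| \geq D - \epsilon$. Such a measurable selection can be obtained by a standard exhaustion over the fundamental sequence: using finite linear combinations of the $\delta_\sigma$ with rational coefficients (a countable dense set in each fibre), one selects for each $[K,x] \in A_\epsilon$ the first such combination (in some fixed enumeration) that witnesses the operator norm up to an additional $\epsilon$. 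Setting $f = v \cdot \mathbbm{1}_{A_\epsilon} / \sqrt{\mu(A_\epsilon)}$ yields a unit vector with $\|Tf\|_\mu \geq D - 2\epsilon$, and letting $\epsilon \to 0$ gives $\|T\| \geq D$.

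The main obstacle is the measurable selection of near-maximizing unit vectors on $A_\epsilon$ needed for the lower bound; once one trusts that rational combinations of the $\delta_\sigma$ form a countable measurable dense set in each fibre, the argument reduces to choosing, for each $[K,x]$, the lexicographically first such combination satisfying the required inequality, which is a measurable selection. The upper bound and well-definedness are routine once measurability of $[K,x] \mapsto T_{[K,x]} f([K,x])$ is established via the fundamental sequence.
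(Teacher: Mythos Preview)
The paper does not supply its own proof of this proposition; it simply cites Dixmier~\cite[II.3]{dixmier} and moves on. Your sketch is the standard direct integral argument one finds there, and it is correct: the upper bound $\|T\|\le D$ follows immediately from the pointwise estimate, and the lower bound is obtained by a measurable selection of near-maximizing unit vectors over a set of positive measure, exactly as you describe using the countable dense family of rational combinations of the fundamental sequence $\{\delta_\sigma\}$.
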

For more details about direct integrals see \cite{dixmier} or \cite{nielsen}.
Suppose we have a Borel probability measure $\mu$ such that the fields of boundary operators 
\[\partial_p\colon [K,x]\mapsto [\partial_{p,[K,x]}\colon C_p^{(2)}((K,x)_\mathbb{N})\to C_{p-1}^{(2)}((K,x)_\mathbb{N})]\]
 and its adjoints
 \[\partial_p^*\colon [K,x]\mapsto [\partial_{p,[K,x]}^*\colon C_{p-1}^{(2)}((K,x)_\mathbb{N})\to C_{p}^{(2)}((K,x)_\mathbb{N})]\]
  are $\mu$-essentially bounded. Therefore, the measurable fields of operators define bounded linear mappings between direct integral Hilbert spaces 
\begin{align*}
\int^\oplus_{\SC}\partial_{p,[K,x]} d\mu=\partial_p\colon C_p^{(2)}(\SC,\mu)\to C_{p-1}^{(2)}(\SC,\mu),\\
\int^\oplus_{\SC}\partial_{p,[K,x]}^* d\mu=\partial_p^*\colon C_{p-1}^{(2)}(\SC,\mu)\to C_{p}^{(2)}(\SC,\mu),
\end{align*}
with operator norm equals the $\mu$-essential supremum.
Since $\partial_{p-1}\circ\partial_p([K,x])=0$ for all $[K,x]\in\SC$ and $p\in\N$ we get a chain complex 
\begin{align*}
...\xrightarrow[]{\partial_{p+1}}C_p^{(2)}(\SC,\mu)\xrightarrow[]{\partial_p}C_{p-1}^{(2)}(\SC,\mu)\xrightarrow[]{\partial_{p-1}}...
\end{align*}
and hence, we can define the \emph{p-th reduced homology of $\SC$ with respect to $\mu$} by 
\[\overline{H}^{(2)}_{p}(\SC,\mu)=\ker \partial_p/\overline{\mathrm{im}\;\partial_{p+1}} .\]
Further, we can define the Laplace operator $\Delta_p=\partial_{p-1}^*\circ\partial_p+\partial_{p+1}\circ\partial_{p+1}^*$ and the direct integral Hilbert space of \emph{harmonic chains} $\mathcal{H}_{p}^{(2)}(\SC,\mu)$. There is, as long as $\Delta_p$ is a bounded operator, an Hodge isomorphism (cf. \cite[p. 192]{eckmann}):
\begin{align*}\overline{H}^{(2)}_{p}(\SC,\mu)\cong\mathcal{H}_{p}^{(2)}(\SC,\mu).
\end{align*}

\begin{prop}
For every random rooted simplicial complex $\mu$ of bounded degree $\deg(\mu)=D$ the boundary operator $\partial_p$, its adjoint $\partial_p^*$  and the Laplace operator $\Delta_p$ are bounded operators on $C_p^{(2)}(\SC,\mu)$ for all $p\in\N$.
\end{prop}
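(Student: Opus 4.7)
The plan is to establish a uniform pointwise bound on $\|\partial_{p,[K,x]}\|$ depending only on $D$ and $p$, valid for every $[K,x]$ in the support of $\mu$, and then to invoke the direct-integral proposition from \cite{dixmier} to promote this pointwise bound to boundedness of $\partial_p$ on $C_p^{(2)}(\SC,\mu)$. Boundedness of $\partial_p^*$ and of $\Delta_p$ will follow formally.

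First I would record the combinatorial consequence of bounded degree: if the maximum degree of $(K,x)_{\N}$ is at most $D$, then every $(p-1)$-simplex $\tau$ is contained in at most $D$ many $p$-simplices. Indeed, any such $p$-simplex has the form $\tau \cup \{v\}$ where $v$ is a vertex adjacent to every vertex of $\tau$, and any fixed vertex of $\tau$ already has at most $D$ neighbours.

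Next I would fix a representative $(K,x)_\N$ and a square-summable $p$-chain $f = \sum_\sigma f_\sigma \delta_\sigma$. The $\tau$-coordinate of $\partial_{p,[K,x]} f$ is a signed sum of those $f_\sigma$ with $\sigma$ a $p$-simplex having $\tau$ as a face, and by the previous step this sum has at most $D$ nonzero terms. Cauchy--Schwarz gives
\[|(\partial_{p,[K,x]} f)_\tau|^2 \le D \sum_{\sigma\colon \tau\subset\sigma} |f_\sigma|^2,\]
and summing over $(p-1)$-simplices $\tau$, interchanging the order of summation, and using that each $p$-simplex has exactly $p+1$ codimension-one faces, I obtain
\[\|\partial_{p,[K,x]} f\|^2 \le D(p+1)\,\|f\|^2.\]
Thus $\|\partial_{p,[K,x]}\| \le \sqrt{D(p+1)}$ for $\mu$-almost every $[K,x]$.

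Consequently the field $[K,x] \mapsto \partial_{p,[K,x]}$ is $\mu$-essentially bounded, and the cited direct-integral proposition yields that $\partial_p$ is bounded on $C_p^{(2)}(\SC,\mu)$ with norm at most $\sqrt{D(p+1)}$. The adjoint field $[K,x] \mapsto \partial_{p,[K,x]}^*$ is measurable with the same essential bound, so $\partial_p^*$ is bounded of the same norm; and $\Delta_p$ is then bounded as a finite sum of compositions of bounded operators. There is no deep step here: the whole argument is a Schur-style counting estimate, and the only care needed is the combinatorial bookkeeping of how many $p$-simplices share a fixed face and how many faces a $p$-simplex has, both of which are controlled by $D$.
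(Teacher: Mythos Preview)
Your argument is correct and follows the same strategy as the paper: bound each fiber operator $\partial_{p,[K,x]}$ uniformly in $D$ and $p$, then invoke the essential-supremum proposition for decomposable operators to conclude that $\partial_p$, $\partial_p^*$, and hence $\Delta_p$ are bounded. The paper's pointwise estimates differ in detail---it asserts $\|\partial_p\|\le\sqrt{p+1}$ and $\|\partial_p^*\|\le\sqrt{D-(p-1)}$ by counting the nonzero coordinates in the image of a single basis simplex---whereas your Cauchy--Schwarz double-count giving $\sqrt{D(p+1)}$ is in fact the more carefully justified step, since bounding the image of each basis vector alone does not control the operator norm.
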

\begin{proof}
First remark that $\|\partial_p\|\le \sqrt{p+1}$ is independent of the vertex degree since every $p$-simplex has $p+1$ faces. Let us look at $\partial_p^*$.  First consider the coboundary operator $\partial^*_p$ on a fixed simplicial complex $K$ with vertex degree bounded by $D$. For every $(p-1)$-simplex $\sigma$ of $K$ its image $\partial_p^*(\sigma)$ consists of at most $D-(p-1)$ \emph{p}-simplices since every vertex, which is contained in $\sigma$, must be contained in the $p$-simplices of the image of $\sigma$ and since the degree is bounded, there can only be $D-(p-1)$ of them. Because $\mu$ is only supported on simplicial complexes with vertex degree $\le D$, the $\mu$-essential supremum $\|\partial_p^*\|\le \sqrt{D-(p-1)}$. Putting the two estimates together, we get
\[\|\Delta_p\|\le 2\sqrt{(p+2)D}.\]
\end{proof}

\section{Trace and spectral measure}
We fix a $p\in\N$ and an unimodular \rrsc\,$\mu$ for this section.
\begin{defn}\label{def-vonneumannalgebra}
The von Neumann algebra $\operatorname{Alg}_p(\mu)$ is given by bounded decomposable operators $T$ on $C_p^{(2)}(\SC,\mu)$ such that for almost all $[K,x]\in\SC$ and all non-rooted and not necessarily order preserving simplicial isomorphisms $(K,x)_\N\to (L,y)_\N$ of subcomplexes of $\EN$  and for all simplicial chains $z,z'\in C_p^{(2)}((K,x)_\N)$ 
\begin{equation}\label{equation-vonneumann algebra on hilbert field}
\langle T_{[K,x]}z,z'\rangle=\langle T_{[L,y]}\phi(z),\phi(z')\rangle.
\end{equation}
holds. 
\end{defn}
It is obvious that this condition is closed under the weak topology. Further, Equation \ref{equation-vonneumann algebra on hilbert field} implies that the operators in $\operatorname{Alg}_p(\mu)$ are independent of the root. Therefore we will sometimes write $T_K$ instead of $T_{[K,x]}$.  \\

We already introduced the fundamental sequence $\{\delta_\sigma\;|\; \sigma \in \EN(p)\}$. This time, we pick only the characteristic functions of oriented $p$-simplices which contain $0$ as a vertex, i.e. 
 \[\mathcal{T}_p=\{\delta_\sigma\;|\; \sigma\in \EN(p)\text{ and }0\in\sigma\},\] 
 and call it the \emph{p-th trace family}. We define a \emph{trace} on the von Neumann algebra $\operatorname{Alg}_p(\mu)$ by
\begin{equation}\label{equation-trace}
\trace_p^\mu(T)\coloneqq\sum_{\tau\in\mathcal{T}_p}\frac{\langle T\tau,\tau\rangle_\mu}{p+1}\in [0,\infty].
\end{equation}
Note that the formula does not depend on the chosen orientation of $\tau$. We verify the trace property:
\begin{align*}
&\sum_{\tau\in \mathcal{T}_p}\langle ST\tau,\tau\rangle_\mu=\sum_{\tau\in \mathcal{T}_p}\langle T\tau,S^*\tau\rangle_\mu\\
=&\int_{\SC} \sum_{\tau\in \mathcal{T}_p} \langle T_K\tau([K,x]),S_K^*\tau([K,x])\rangle d\mu([K,x])\\
=&\int_{\SC}\sum_{\sigma\in K(p)}\sum_{\tau\in \mathcal{T}_p}\langle T_K\tau([K,x]),\sigma\rangle\langle\sigma,S_K^*\tau([K,x])\rangle d\mu([K,x])\\
=&\int_{\SC}\sum_{y\in V(K)}\underbrace{\sum_{\substack{\sigma\in (K,x)_\N(p)\\ y\in\sigma}}\sum_{\tau\in \mathcal{T}_p}\frac{\langle T_K\tau([K,x]),\sigma\rangle\langle S_K\sigma,\tau([K,x])\rangle}{p+1}}_{f([K,x,y])}d\mu([K,x]) 
\end{align*}
Now we use unimodularity (Definition \ref{definition unimodular}):
\begin{align*}
=&\int_{\SC}\sum_{y\in V(K)} f([K,y,x])d\mu([K,x])\\
=&\int_{\SC}\sum_{y\in V(K)}\sum_{\substack{\sigma\in (K,y)_\N(p)\\ x\in\sigma}}\sum_{\tau\in \mathcal{T}_p}\frac{\langle T_K\tau([K,y]),\sigma\rangle\langle S_K\sigma,\tau([K,y])\rangle}{p+1}d\mu([K,x]) \\
=&\int_{\SC}\sum_{y\in V(K)}\sum_{\substack{\sigma\in (K,y)_\N(p)\\ x\in\sigma}}\sum_{\tau\in \mathcal{T}_p}\frac{\langle S_K\sigma,\tau([K,y])\rangle\langle \tau([K,y]),T_K^*\sigma\rangle}{p+1}d\mu([K,x]) \\
=&\int_{\SC}\sum_{\substack{\sigma\in (K,x)_\N(p)\\ 0\in\sigma}}\langle S_K\sigma,T_K^*\sigma\rangle d\mu([K,x]) \\
=&\int_{\SC}\sum_{\tau\in\mathcal{T}}\langle S_K\tau([K,x]),T_K^*\tau([K,x])\rangle d\mu([K,x])=\sum_{\tau\in\mathcal{T}_p}\langle TS\tau,\tau\rangle_\mu
\end{align*}

\begin{prop} The trace $\trace_p^\mu$ is  normal, faithful and semi-finite. If the expected $p$-degree at the root $\mathbb{E}_\mu(\deg_p)$ is finite, then $\trace_p^\mu$  is a finite trace.
\end{prop}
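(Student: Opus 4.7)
I would prove the four assertions separately, starting with the easy ones.

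Normality is a general fact about sums of vector states: each functional $T\mapsto \langle T\tau,\tau\rangle_\mu$ for $\tau\in \mathcal{T}_p$ is normal (it is ultraweakly continuous), and a countable nonnegative combination of normal positive functionals remains normal, since for any bounded increasing net $T_\alpha\uparrow T$ one interchanges the supremum over $\alpha$ with the sum over $\tau$ by monotone convergence. For the finiteness claim I would compute
\[
\trace_p^\mu(\id) \;=\; \frac{1}{p+1}\sum_{\substack{\sigma\in\EN(p)\\ 0\in\sigma}} \mu\bigl(\{[K,x]\,:\,\sigma\in(K,x)_\N\}\bigr) \;=\; \frac{\mathbb E_\mu(\deg_p)}{p+1},
\]
the middle expression being the expected number of $p$-simplices of $(K,x)_\N$ containing the root $0$, which is finite precisely when $\mathbb E_\mu(\deg_p)<\infty$.

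The heart of the proof is faithfulness, where both unimodularity and the re-rooting invariance (\ref{equation-vonneumann algebra on hilbert field}) are needed. Suppose $T\in \alg_p(\mu)^+$ with $\trace_p^\mu(T)=0$. Then $\|T^{1/2}\tau\|_\mu^2 = \langle T\tau,\tau\rangle_\mu = 0$ for every $\tau\in\mathcal T_p$, forcing $T\tau=0$, and hence the Borel set
\[
G \;:=\; \bigl\{[K,x]\in\SC \,:\, T_{[K,x]}\delta_\sigma = 0 \text{ for all } \sigma\in (K,x)_\N(p) \text{ with } 0\in\sigma\bigr\}
\]
has $\mu(G)=1$. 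I would apply Definition \ref{definition unimodular} to the nonnegative Borel function $f([K,x,y]):=\mathbbm 1_{G^c}([K,y])$: the right-hand side equals $\int_{\SC} |V(K)|\,\mathbbm 1_{G^c}([K,y])\,d\mu([K,y])$, which vanishes under the measure-theoretic convention $\infty\cdot 0=0$, so the left-hand side $\int_{\SC} |\{y\in V(K):[K,y]\notin G\}|\,d\mu([K,x])$ also vanishes. Thus, for $\mu$-a.e.\ $[K,x]$, every re-rooting $[K,v]$ lies in $G$. Given such $[K,x]$ and any $\sigma\in (K,x)_\N(p)$, I pick a vertex $v\in\sigma$ and apply (\ref{equation-vonneumann algebra on hilbert field}) to the simplicial isomorphism $(K,x)_\N\to (K,v)_\N$ sending $v$ to the root: the vanishing of $T_{[K,v]}$ on simplices containing $0$ transfers to $T_{[K,x]}\delta_\sigma=0$. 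Varying $\sigma$ yields $T_{[K,x]}=0$.

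For semi-finiteness I would introduce the diagonal vertex-degree cutoff operators
\[
(P_n)_{[K,x]}\delta_\sigma := \begin{cases} \delta_\sigma & \text{if } \deg_p(v)\leq n \text{ for every }v\in\sigma,\\ 0 & \text{otherwise.}\end{cases}
\]
The defining condition depends only on the unrooted combinatorial isomorphism type around $\sigma$, so $P_n\in\alg_p(\mu)$ by Definition \ref{def-vonneumannalgebra}. A direct estimate gives $\trace_p^\mu(P_n)\leq \int \deg_p(0)\,\mathbbm 1[\deg_p(0)\leq n]\,d\mu/(p+1)\leq n/(p+1)<\infty$, since the condition on $\sigma$ already bounds the root's $p$-degree. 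Local finiteness forces $P_n\uparrow \id$ strongly, because each individual simplex is finite and all its vertices have finite $p$-degree. Given $T\in\alg_p(\mu)^+$ nonzero, set $S_n:=T^{1/2}P_nT^{1/2}\in\alg_p(\mu)^+$: from $T-S_n=T^{1/2}(\id-P_n)T^{1/2}\geq 0$ one gets $S_n\leq T$; cyclicity of the trace gives $\trace_p^\mu(S_n)=\trace_p^\mu(TP_n)\leq \|T\|\trace_p^\mu(P_n)<\infty$; and because $T^{1/2}$ is bounded the strong convergence $P_n\to\id$ forces $S_n\to T$ strongly, so $S_n\neq 0$ for $n$ sufficiently large. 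The main obstacle throughout is the faithfulness step, where transferring the pointwise equation $T\tau=0$ for $\tau\in\mathcal T_p$ into $\mu$-a.e.\ vanishing of $T_{[K,x]}$ requires combining unimodularity with the root-invariance built into $\alg_p(\mu)$; the other three properties follow once the right cutoff operators and vector-state decomposition are in place.
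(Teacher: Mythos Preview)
Your proof is correct and, in two places, takes a genuinely different route from the paper.

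For semi-finiteness the paper does not introduce projections at all: given $A\in\alg_p(\mu)^+$ it simply rescales fiberwise, setting $A_{n,[K,x]}:=A_{[K,x]}/\max\{\deg_p(x)-n,1\}$, and checks that $\trace_p^\mu(A_n)\le \|A\|(n+1)/(p+1)$ and that $A_n\to A$ weakly. Your approach via the root-independent cutoff projections $P_n$ and the sandwiches $S_n=T^{1/2}P_nT^{1/2}$ is cleaner in one respect: the scalar $\max\{\deg_p(x)-n,1\}^{-1}$ depends on the root, so the paper's $A_n$ is not obviously invariant under re-rooting in the sense of equation~(\ref{equation-vonneumann algebra on hilbert field}), whereas your $P_n$ manifestly is. Both arguments yield the same finite-trace bound and the same convergence to $A$ (resp.\ $T$). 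For faithfulness the paper only writes that it ``follows from Equation~(\ref{equation-vonneumann algebra on hilbert field})''; your argument makes explicit the second ingredient that is really needed, namely the mass-transport via Definition~\ref{definition unimodular} to propagate the vanishing of $T_{[K,x]}$ on root-containing simplices to \emph{all} re-rootings $[K,v]$, after which (\ref{equation-vonneumann algebra on hilbert field}) transfers it back. The remaining items (normality as a countable sum of vector states, finiteness via $\trace_p^\mu(\id)=\mathbb E_\mu(\deg_p)/(p+1)$) agree with what the paper does or states in the surrounding text. One cosmetic remark: in the bound $\trace_p^\mu(S_n)\le\|T\|\trace_p^\mu(P_n)$ it is safest to pass through $\trace_p^\mu(P_nTP_n)$ rather than $\trace_p^\mu(TP_n)$, since the latter is not a priori the trace of a positive element.
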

\begin{proof}
Faithfulness follows from Equation (\ref{equation-vonneumann algebra on hilbert field}), Definition \ref{def-vonneumannalgebra}. We check semi-finiteness. Let $A\in\alg(\mu)$ be positive. We define the operators
\[A_{n,[K,x]}\coloneqq\frac{A_{[K,x]}}{\max\{\deg_p(x)-n,1\}}.\]
We will show that $A_n$ weakly converges to $A$ but first, we verify that $\trace_p^\mu(A_n)<\infty$.
\begin{align*}
\trace_p^\mu(A_n)&=\int_{\SC}\sum_{\tau\in\mathcal{T}_p}\frac{\langle A \tau,\tau\rangle}{(p+1)\max\{\deg_p(x)-n,1\}}d\mu([K,x])\\
&\le\int_{\SC}\frac{\|A\|\deg_p(x)}{(p+1)\max\{\deg_p(x)-n,1\}}d\mu([K,x])\\
&\le \frac{\|A\|(n+1)}{p+1}
\end{align*}
We check weak convergence. Let $\sigma,\theta\in C_p^{(2)}(\SC,\mu)$:
\begin{align*}
\langle A_n \sigma,\theta\rangle_\mu=&\int_{\SC}\langle A_n\sigma,\theta\rangle d\mu=\int_{\SC}\frac{\langle A\sigma,\theta\rangle}{\max\{\deg_p(x)-n,1\}}d\mu\\
=&\int_{\substack{[K,x]\in\SC\\ \deg_p(x)\le n}}\langle A\sigma,\theta\rangle d\mu + \int_{\substack{[K,x]\in\SC\\ \deg_p(x)> n}}\frac{\langle A\sigma,\theta\rangle}{\deg_p(x)-n}d\mu \\
\xlongrightarrow{n\to\infty}&\int_{\SC}\langle A\sigma,\theta\rangle d\mu=\langle A\sigma,\theta\rangle_\mu
\end{align*}
In the last step we used the fact that the elements of $\SC$ are locally finite, hence the right summand tends to zero and the left one to $\int_{\SC}\langle A\sigma,\theta\rangle d\mu$.
\end{proof}
\begin{exmp}
We compute the trace of the identity operator $\id_p$ on $C_p^{(2)}(\SC,\mu)$:
\begin{align*}
\trace_p^\mu(\id_p)&=\int_{\SC}\sum_{\tau\in\mathcal{T}_p}\frac{\langle \id_p\tau,\tau\rangle}{p+1}d\mu([K,x])\\
&=\int_{\SC}\frac{\deg_p(x)}{p+1}d\mu([K,x])=\mathbb{E}(\deg_p)/(p+1).
\end{align*}
If $\mu$ is the unimodular \rrsc\, associated to a finite simplicial complex $K$ this is equal to $\frac{|K(p)|}{|V(K)|}$.
\end{exmp}
Let $\mu$ be an unimodular random rooted simplicial complex such that the $p$-th Laplace operator is a bounded operator.
\begin{defn}\label{bettinumbers}
We define the \emph{p-th $\ell^2$-Betti number of $\mu$} as the trace of the projection onto the kernel of the Laplace operator, i.e.
\begin{align*}
\beta_p^{(2)}(\mu)\coloneqq\trace_p^\mu(\mathrm{proj}_{\ker(\Delta_p)}).
\end{align*}
\end{defn}

\begin{exmp}\label{example-betti-numbers}
We compute the $\ell^2$-Betti numbers of the \rrsc s defined in Example \ref{example-measures}. We denote by $b_p(K)$ the $p$-th ordinary Betti number of the simplicial complex $K$ and by $\beta
_p^{(2)}(K,\Gamma)$ the $\ell^2$-Betti numbers of a $\Gamma$-simplicial complex with respect to the group von Neumann algebra. Gaboriau gives a definition of $\ell^2$-Betti numbers of $\mathcal{R}$-simplicial complexes which we denote by $\beta_p(\Sigma,\mathcal{R},\nu)$. If the $\mathcal{R}$-simplicial complex is $p$-connected, the definition is independent of the $\mathcal{R}$-simplicial complex $\Sigma$ and hence it is called the $p$-th $\ell^2$-Betti number $\beta_p(\mathcal{R},\nu)$ of the equivalence realtion $\mathcal{R}$ \cite{gaboriau}.
\begin{enumerate}
\item Given a finite simplicial complex $L$ with its associated unimodular \rrsc\, $\mu_L$ and let $P$ denote the projection onto the kernel of the $p$-th Laplace operator. We compute the $\ell^2$-Betti numbers of $\mu_L$:
\begin{align*}
\beta^{(2)}_p(\mu_L)&=\int_{\SC}\sum_{\tau\in\mathcal{T}_p}\frac{\langle P \tau,\tau\rangle}{p+1}d\mu_L([K,x])\\
&=\sum_{\substack{[K,x]\in\SC\\ K\cong L}}\mu_L([K,x])\sum_{\substack{\sigma \in (K,x)_\N(p) \\ 0\in\sigma}}\frac{\langle P \sigma,\sigma\rangle}{p+1}\\
&=\sum_{\substack{[K,x]\in \SC\\ K\cong L}}\frac{|\{y\in V(L)\;|\;(L,y)\in [K,x]\}|}{|V(L)|}\sum_{\substack{\sigma \in (K,x)_\N(p) \\ 0\in\sigma}}\frac{\langle P \sigma,\sigma\rangle}{p+1}\\
&=\frac{1}{|V(L)|}\sum_{y\in V(L)}\sum_{\substack{\sigma \in L(p) \\ y\in\sigma}}\frac{\langle P \sigma,\sigma\rangle}{p+1}\\
&=\frac{1}{|V(L)|}\sum_{\sigma \in L(p)}\langle P \sigma,\sigma\rangle=\frac{b_p(L)}{|V(L)|}=\beta_p^{(2)}(L,\{e\})
\end{align*}
From line three to four we used the fact that $(L,y)$ and $((K,x)_\N,0)$ are root isomorphic, so 
\[\sum_{\substack{\sigma \in (K,x)_\N(p) \\ 0\in\sigma}}\frac{\langle P \sigma,\sigma\rangle}{p+1}=\sum_{\substack{\sigma \in L(p) \\ y\in\sigma}}\frac{\langle P \sigma,\sigma\rangle}{p+1}.\]
Also note, if we sum over all $p$-simplices containing a fixed vertex and then sum over all vertices, we hit every $p$-simplex exactly p+1 times. We will use this fact constantly.
\item Let $L$ be an infinite connected locally finite simplicial complex with a free, admissible and cocompact action of a discrete group $\Gamma$ with fundamental domain $\cF$.
\begin{align*}
\beta^{(2)}_p(\mu_{L})&=\int_{\SC}\sum_{\tau\in\mathcal{T}_p}\frac{\langle P\tau,\tau\rangle}{p+1}d\mu_{L}([K,x]) \\
&=\sum_{\substack{[K,x]\in\SC\\ K\cong L}}\mu_L([K,x])\sum_{\substack{\sigma \in (K,x)_\N(p) \\ 0\in\sigma}}\frac{\langle P\sigma,\sigma\rangle}{p+1}\\
&=\frac{1}{|\cF|}\sum_{x\in \cF}\sum_{\substack{\sigma \in L(p)\\ x\in\sigma}}\frac{\langle P \sigma,\sigma\rangle}{p+1}\\
&=\frac{1}{|\cF|}\sum_{\sigma \in \cF(p)}\langle P \sigma,\sigma\rangle=\frac{\beta^{(2)}_p(L,\Gamma)}{|\cF|}
\end{align*}
\item Given a probability measure preserving equivalence relation $\mathcal{R}\subset X\times X$ on a standard Borel space $(X,\nu)$. Let $\Sigma$ be a $\mathcal{R}$-simplicial complex with fundamental domain $F=\bigsqcup_{j\in J} F_j$ of $\Sigma^{(0)}$ such that $\pi\colon F_j\to X$ is injective and $\sum_{j\in J} \pi(\nu(F_j))<\infty$. We denote the vertex over $x$ in $F_j$ by $f_j(x)$.
A fundamental domain for the ordered simplices of $\Sigma^{(p)}$ is given by
\begin{align*}
\{ (f,v_1,...,v_p)\in \Sigma^{(p)}\;|\; f\in F,\; v_i\in \Sigma^{(0)}\text{ for }1\le i\le p\}.
\end{align*}
Among these ordered simplices we can choose a representative for each unordered simplex, this gives us a fundamental domain $F'=\bigsqcup_{i\in I} F_i'$ of $\Sigma^{(p)}$ for the action of $\mathcal{R}\times \mathfrak{S}_p$ such that the projection $\pi\colon F'_i\to X$ is injective for each $i\in I$. The characteristic functions of these sets define a total sequence $\sigma_i$ for the direct integral $\int_X^\oplus C^{(2)}_p(\Sigma_x)d\nu$ and hence define a trace \cite[p.120]{gaboriau}. We compute the $\ell^2$-Betti numbers:
\begin{align*}
&\beta^{(2)}_p(\mu_{\Sigma,\mathcal{R}})\\
=&\int_{\SC}\sum_{\tau \in \mathcal{T}_p}\frac{\langle P \tau,\tau\rangle}{p+1}d\mu_{\Sigma,\mathcal{R}}([K,x]) \\
=&\int_X \frac{1}{\sum_{j\in J} \nu(\pi(F_j))}\sum_{j\in J}\sum_{\tau \in \mathcal{T}_p}\chi_{\pi(F_j)}\frac{\langle P \tau,\tau\rangle([\Sigma_x,f_j(x)])}{p+1}d\nu\\
=&\int_X \frac{1}{\sum_{j\in J} \nu(\pi(F_j))}\sum_{j\in J}\sum_{\substack{\sigma \in \Sigma_x^{(p)}\\ f_j(x)\in\sigma}}\chi_{\pi(F_j)}(x)\frac{\langle P \sigma,\sigma\rangle_{\Sigma_x}}{p+1}d\nu\\
=&\frac{1}{\sum_{j\in J} \nu(\pi(F_j))}\int_X \chi_{\pi(F'_i)}(x) \sum_{i\in I}\langle P \sigma_i(x),\sigma_i(x)\rangle_{\Sigma_x}d\nu\\
=&\frac{\dim_\mathcal{R}\mathcal{H}_p^{(2)}(\Sigma)}{\sum_{j\in J} \nu(\pi(F_j))}=\frac{\beta_p(\Sigma,\mathcal{R},\nu)}{\sum_{j\in J} \nu(\pi(F_j))}
\end{align*}
As in the examples before, we used the fact that summing over all $p$-simplices containing one of the vertices of the fundamental domain $F$, is the same as summing $(p+1)$ times over a fundamental domain $F'$ for the unordered $p$-simplices.
\end{enumerate}
\end{exmp}
Since we will need it later, we recall the spectral theorem for self-adjoint operators.
\begin{thm}[Spectral theorem]\label{spectral theorem}
 For every self-adjoint operator $T$ on a Hilbert space $\mathcal{H}$ there exist a unique projection valued measure $E_T\colon\mathrm{Bor}(\R)\to \mathcal{P}(\mathcal{H})$ such that  for all bounded Borel functions $f$ on $\R$
 \[f(T)=\int_\R f(\lambda)dE_T(\lambda).\]

\end{thm}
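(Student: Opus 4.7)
The plan is to follow the standard three-step construction of the Borel functional calculus, starting from the continuous functional calculus and then passing to bounded Borel functions; the projection-valued measure is then read off from characteristic functions. For the paper's purposes only the bounded self-adjoint case is actually needed (the Laplace operator $\Delta_p$ is bounded by the previous proposition), so I will first give the argument for $T$ bounded self-adjoint and then indicate the reduction of the unbounded case via the Cayley transform.

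First I would set up the continuous functional calculus. For a polynomial $p$ set $p(T)=\sum a_kT^k$; using $\|T^n\|=\|T\|^n$ for self-adjoint $T$ together with the spectral radius formula yields the key identity $\|p(T)\|=\sup_{\lambda\in\sigma(T)}|p(\lambda)|$. Combined with Stone--Weierstrass this extends $p\mapsto p(T)$ to an isometric $*$-homomorphism $\Phi\colon C(\sigma(T))\to B(\mathcal{H})$.

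Next I would extend $\Phi$ to bounded Borel functions. For fixed $x,y\in\mathcal{H}$ the functional $f\mapsto\langle \Phi(f)x,y\rangle$ is bounded on $C(\sigma(T))$, so Riesz--Markov produces a unique complex regular Borel measure $\mu_{x,y}$ on $\sigma(T)$ with $\langle \Phi(f)x,y\rangle=\int f\,d\mu_{x,y}$. For a bounded Borel function $g$ one then defines $\langle g(T)x,y\rangle:=\int g\,d\mu_{x,y}$; this sesquilinear form is bounded by $\|g\|_\infty\|x\|\|y\|$ and therefore represents a bounded operator $g(T)$. Multiplicativity $g_1(T)g_2(T)=(g_1g_2)(T)$ is first verified on continuous functions and then extended to bounded Borel functions by dominated convergence applied to $\mu_{x,x}$.

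Finally I would define the projection-valued measure by $E_T(B):=\chi_B(T)$. Reality of $\chi_B$ gives self-adjointness, multiplicativity gives idempotency, and countable additivity in the strong operator topology follows from dominated convergence applied to the scalar measures $\mu_{x,x}$. The integral formula $f(T)=\int f\,dE_T$ then holds by construction for characteristic functions and extends to bounded Borel $f$ by linearity and a monotone class argument. Uniqueness is read off as follows: any PVM $E'$ satisfying the integral formula must give $\int p\,d\langle E'(\cdot)x,y\rangle=\langle p(T)x,y\rangle=\int p\,d\mu_{x,y}$ for all polynomials $p$, so the scalar measures agree by Riesz--Markov uniqueness, whence $E'=E_T$. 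For unbounded self-adjoint $T$ one applies the bounded case to the Cayley transform $U=(T-i)(T+i)^{-1}$ (a unitary on $\mathcal{H}$) and pushes its spectral measure from the unit circle back to $\R$ via $\lambda=i(1+z)/(1-z)$. The main obstacle is verifying multiplicativity of the Borel calculus: it requires the approximation argument indicated above rather than a direct computation, and the unbounded reduction also demands care with domains which is why most textbooks prove the bounded case in detail and merely sketch the Cayley transform step.
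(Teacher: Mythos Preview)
The paper does not prove this theorem at all; it is stated without proof as a classical result being recalled for later use (``Since we will need it later, we recall the spectral theorem for self-adjoint operators''). Your outline is a correct and standard textbook construction of the Borel functional calculus, but no proof is expected here and none should be supplied.
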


 The following proposition is from Dykema \cite[Proposition 4.2]{dykema}, it also applies to unbounded operators:
\begin{prop}\label{prop-dykema}
Let $T=\int_{X}^\oplus T(\xi)d\eta(\xi)$ be a  self-adjoint, decomposable operator on the direct integral Hilbert space $\mathcal{H}=\int_X^\oplus \mathcal{H}(\xi)d\eta(\xi)$. For every Borel subset $B\subset \R$ we have
\[E_T(B)=\int_{X}^\oplus E_{T(\xi)}(B)d\eta(\xi).\]
Moreover, for every Borel measurable function $f\colon\R\to\R$, we have
\[f(T)=\int_{X}^\oplus f(T(\xi))d\eta(\xi).\]
\end{prop}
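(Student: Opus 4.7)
The plan is to define the candidate spectral measure $F(B) := \int_X^\oplus E_{T(\xi)}(B)\, d\eta(\xi)$ and use the uniqueness clause in Theorem \ref{spectral theorem} to conclude $F = E_T$. First I would verify that $F$ is a well-defined projection-valued measure on $\mathcal{H}$. The field $\xi \mapsto E_{T(\xi)}(B)$ is measurable: for half-planes of resolvents one has $\xi \mapsto (T(\xi)-z)^{-1}$ measurable by the decomposability assumption, Stone's formula yields measurability for open intervals, and the monotone class theorem extends this to all Borel $B$. Since each fiber is a projection of norm at most one, the field is $\eta$-essentially bounded and produces a bounded decomposable self-adjoint projection. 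Countable additivity of $F$ in the strong operator topology follows fiberwise from the countable additivity of each $E_{T(\xi)}$ together with dominated convergence inside the direct integral.

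Next I would form the self-adjoint operator $\tilde T := \int_\R \lambda\, dF(\lambda)$ and check $\tilde T = T$ via matrix coefficients. For $v, w$ in the common domain,
\[
\langle \tilde T v, w\rangle \;=\; \int_\R \lambda\, d\langle F(\lambda) v, w\rangle \;=\; \int_\R \lambda\, d\!\left(\int_X \langle E_{T(\xi)}(\lambda) v(\xi), w(\xi)\rangle\, d\eta(\xi)\right).
\]
Since each scalar measure $\langle E_{T(\xi)}(\,\cdot\,)v(\xi),w(\xi)\rangle$ has total variation at most $\|v(\xi)\|\|w(\xi)\|$, a Fubini exchange (after truncation to $|\lambda|\le n$) gives the inner integral $\int_X \langle T(\xi) v(\xi), w(\xi)\rangle\, d\eta(\xi) = \langle Tv, w\rangle$. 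In the unbounded case one first carries this out on the dense subspace $\bigcup_n F([-n,n])\mathcal{H} = \bigcup_n \int^\oplus E_{T(\xi)}([-n,n])\mathcal{H}(\xi)\, d\eta(\xi)$, which is a core for both $\tilde T$ and $T$, and uses that a self-adjoint operator is determined by its restriction to a core. Uniqueness in the spectral theorem then yields $E_T(B) = F(B)$.

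For the functional-calculus statement, once $E_T(B) = \int^\oplus E_{T(\xi)}(B)\, d\eta(\xi)$ is established, the formula $f(T) = \int^\oplus f(T(\xi))\, d\eta(\xi)$ is immediate for simple Borel functions $f = \sum c_i \chi_{B_i}$ by linearity of the direct integral. For a general bounded Borel $f$ one uniformly approximates by simple functions and uses $\|f(T(\xi))\| \le \|f\|_\infty$ to pass to the limit. For unbounded Borel $f$ the truncations $f_n := f \cdot \chi_{\{|f| \le n\}}$ agree on $F([-n,n])\mathcal{H}$ and converge strongly on the common domain, again invoking the dense core above.

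The main obstacle I expect is precisely the Fubini exchange and the careful management of domains in the unbounded setting: the intermediate expression $\int_\R \lambda\, d(\cdots)$ is only finite on the correct spectral subspaces, so the truncation argument and verification that the cut-offs $F([-n,n])v$ converge to $v$ in $\mathcal{H}$ for $v \in \mathrm{dom}(T)$ form the technical heart of the proof. The measurability of the field $\xi \mapsto E_{T(\xi)}$ established at the outset is what makes this truncation compatible with the direct integral structure; once this is in place, the remainder of the argument proceeds by routine spectral-theoretic extension.
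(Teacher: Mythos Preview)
The paper does not actually prove this proposition: it is quoted verbatim from Dykema--Noles--Sukochev--Zanin \cite[Proposition 4.2]{dykema} and used as a black box, so there is no in-paper argument to compare your proposal against. Your outline is the standard route to this result and is essentially correct: establish measurability of $\xi\mapsto E_{T(\xi)}(B)$ (Stone's formula plus a monotone-class argument is the usual device), verify that the resulting direct-integral projections form a projection-valued measure, identify the associated self-adjoint operator with $T$ on a common core, and invoke uniqueness in the spectral theorem; the functional-calculus statement then follows by approximation. Your own identification of the delicate points---the Fubini step and the domain bookkeeping in the unbounded case---is accurate, and these are handled in the cited reference along the same lines you describe.
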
   
By applying the spectral theorem, we can define the \emph {spectral measure} of a self-adjoint operator $T$ on $C_p^{(2)}(\SC,\mu)$ as
\begin{align*}
&\nu_T\colon \mathrm{Bor}(\R)\to [0,\infty];\\
&\nu_T(B)=\trace_p^\mu(E_T(B))=\sum_{\tau\in \mathcal{T}_p}\langle E_T(B)\tau,\tau\rangle_\mu/(p+1).
\end{align*}
Note that $\nu_T$ has compact support if $T$ is a bounded operator and further, if $T$ is decomposable, this definition is (by Proposition \ref{prop-dykema}) equivalent to
\begin{align*}
\nu_T(B)&=\sum_{\tau\in \mathcal{T}_p}\int_{\SC}\langle E_{T_{[K,x]}}(B)\tau,\tau\rangle d\mu/(p+1)\\
&=\int_{\SC}\sum_{\substack{\sigma\in K(p)\\ x\in\sigma}}\langle E_{T_{[K,x]}}(B)\sigma,\sigma\rangle/(p+1) d\mu \\
&=:\int_{\SC} \nu_{T_{[K,x]}}(B)d\mu
\end{align*}
Observe that by Theorem \ref{spectral theorem} we also have that:
\begin{align*}
\trace_p^\mu(f(T))&=\sum_{\tau\in\mathcal{T}_p}\langle f(T)\tau,\tau\rangle_\mu/(p+1)\\
&=\sum_{\tau\in\mathcal{T}_p}\int_\R f(\lambda)d\langle E_T(\lambda) \tau,\tau\rangle/(p+1)\\
&=\int_\R f(\lambda)d\nu_T(\lambda).
\end{align*}

\begin{rem} We return to the Laplace operator. The Laplace operator is always closable but it is in general, when the vertex degree is not bounded, not essentially self-adjoint. That is the reason why we will only consider \rrsc s of bounded degree. Before we go on, we will summarize some results about the question, when the Laplace operator is essentially self-adjoint. In \cite{wojciechowski} Wojciechowski proves that the $0$th Laplace operator on a locally-finite graph is essentially self-adjoint. 
In \cite[Proposition 2.2]{bordenave} it is shown that the $0$th Laplace operator for unimodular measures on the space of rooted locally finite graphs $\mathcal{G}_*$ is essentially self-adjoint. In \cite{anne} Anné and Torki-Hamza defined a property called $\chi$-completness for graphs, which implies that the $1$st-Laplace operator is essentially self-adjoint. This property was extended by Chebbi \cite{chebbi} to $2$-dimensional simplicial complexes, where it also implies essentially self-adjointness of the $1$st and $2$nd Laplace operator. Chebbi likewise gives an example of a $2$-dimensional simplicial complex with a non self-adjoint Laplace operator. In \cite{linial-peled} Linial and Peled studied the spectral measures of random simplicial complexes $Y(n,\frac{c}{n})$ and showed that they weakly converge to the spectral measure of a Poisson d-tree, which has a self-adjoint Laplace operator. 
\end{rem}
Even though the most things hold true in the unbounded case, as long as the Laplace operator is self-adjoint, we will assume in the following that $\mu$ is an unimodular random rooted simplicial complex of bounded degree. We denote the projection valued measure of the $p$-th Laplace operator $\Delta_p$ on $C_p^{(2)}(\SC,\mu)$ by $E_{\Delta_p}$.
\begin{defn}
We define the \emph{p-th spectral measure} of an unimodular random rooted simplicial complex of bounded degree $\mu$ as 
\[\nu_p^\mu(S)= \trace_p^\mu(E_{\Delta_p}(S)),\]
 for every Borel set $S\subset \R$.
\end{defn}
As remarked after Proposition \ref{prop-dykema}, this is equivalent to
\[\nu_p^\mu(S)=\int_{\SC} \nu_p^K(S)d\mu,\]
where
\[\nu_p^K(S)=\sum_{\substack{\sigma\in K(p)\\
x\in\sigma}}\frac{\langle E_{\Delta_p}^K(S)\sigma,\sigma\rangle}{p+1}\] is the spectral measure of the $p$-th Laplace operator of $K$ with $E_{\Delta_p}^K$ the corresponding projection valued measure. This approach was for example chosen in \cite{abert-thom-virag} and \cite{linial-peled} in the context of rooted graphs.\\
Note that $\nu_p^\mu$ is not a probability measure, but it is finite as long as the expected $p$-degree at the root $\mathbb{E}_\mu(\deg_p)=\int_{\SC}\deg_p(x)d\mu$ is finite (which is the case if $\mu$ has  bounded degree), since we have 
\begin{align*}
\nu_p^\mu(\mathbb{R})&=\trace_p^\mu(E_p(\mathbb{R}))=\trace_p^\mu(\id_p)=\mathbb{E}_\mu(\deg_p)/(p+1).
\end{align*} 

\begin{rem}
For an unimodular random rooted simplicial complex $\mu$ of bounded degree the $p$-th $\ell^2$-Betti number is
\begin{align*}
\beta_p^{(2)}(\mu)=\nu_p^{\mu}(\{0\}).
\end{align*}
\end{rem}

\section{Benjamini-Schramm convergence and approximation Theorem}

\begin{defn}\label{definition-BSconvergence} We say a sequence $K_n$ of simplicial complexes converges \emph{Benjamini-Schramm} to $K$ if 
\[\mu_{K_n}(U_{[L,y]}(r))\to \mu_{K}(U_{[L,y]}(r))\]
for all $r$ and all finite $[L,y]\in \SC$. This is equivalent to say that the \rrsc s $\mu_{K_n}$ weakly converges to $\mu_K$ i.e.
\[\int_{\SC} f([L,x])d\mu_{K_n}([L,x])\to \int_{\SC} f([L,x])d\mu_{K}([L,x])\]
for all bounded continuous functions $f\colon\SC\to \mathbb{R}$. 
\end{defn}

\begin{defn}
We call a random rooted simplicial complex $\mu$ \emph{sofic} if there is a sequence $(K_n)_n$ of finite connected simplicial complexes converging Benjamini-Schramm to $\mu$.
\end{defn}
It is clear that sofic measures are unimodular since unimodularity is preserved under weak convergence. It is an open question if all unimodular measures are sofic \cite{aldous-lyons}.

\begin{exmp}[Towers of covering spaces]
Let $K$ be a simplicial complex of vertex degree bounded by some $D\in \N$ with a simplicial, proper and cocompact action of a discrete group $\Gamma$. Suppose that $\Gamma$ is residually finite and let $(N_n)_{n\in N}$ be a descending sequence of finite index normal subgroups of $\Gamma$ with $\bigcap_{n\in\N}N_n=\{1\}$. Then the sequence $(K_n)_n:=(K/N_n)_n$ converges Benjamini-Schramm to 
\[\mu_K=\frac{1}{c(\Gamma)}\sum_{x\in\cF^{(0)}} \frac{\delta_{[K,x]}}{|\stab_\Gamma(x)|},\]
where $\cF$ is a fundamental domain for the $\Gamma$-action. This can be seen by the following fact: We fix an $r>0$. Then, for sufficiently large $n\in \N$, the $r$-ball $B_r(K,x)$ is isomorphic to the $r$-ball $B_r(K_n,xN_n)$ for all $x\in V(K)$ and the action of $N_n$ on $K$ is free. Let $\cF(N_n)$ be a fundamental domain for the action of $N_n$ on $K$, with $n$ large enough. Hence, we have for every $\alpha\in \SC$:
\begin{align*}
\mu_{K}(U_\alpha(r))&=\frac{|\{x\in V(\cF(N_n))\mid B_r(K,x)\cong \alpha\}|}{|V(\cF(N_n))|} & \text{ cf. Example \ref{example-measures}\,(2)} \\
&= \frac{|\{x\in V(K_n)\mid B_r(K_n,xN_n)\cong \alpha\}|}{|V(K_n)|}&\\
&=\mu_{K_n}(U_\alpha(r)).
\end{align*} 
Let $K$ be a simplicial complex with a cocompact action of a sofic group, then there exist a Benjamini-Schramm convergent sequence of simplicial complexes converging to $\mu_K$.
\end{exmp}

In the following we will present our version of Lück's approximation Theorem \cite{lueck}. The proof adapts the ideas of \cite{abert-thom-virag} to higher dimensions, where Abért, Thom and Virág proved a similar result for graphs \cite{abert-thom-virag}. They showed that for sofic measures of uniformly bounded degree on the space of rooted graphs, weak convergence implies pointwise convergence of the spectral measures. Our proof relies on their ideas, though we have a different approach, which gives us the possibility to define spectral measures in a more general way.  They also mentioned the possibility of applying their result to simplicial complexes by identifying the $p$-simplices with vertices and putting $\Z$-labelled edges between them, according to the matrix coefficients of the $p$-th Laplace operator.
\begin{thm*}\label{theorem-approximation}
Let $(\mu_n)_{n\in\N}$ be a sequence of sofic random rooted simplicial complexes with uniformly bounded vertex degree. If the sequence weakly converge to a random rooted simplicial complex $\mu$, then the $\ell^2$-Betti numbers of $(\mu_n)_{n\in\N}$ converge to the $\ell^2$-Betti numbers of $\mu$.
\end{thm*}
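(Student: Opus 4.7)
The plan is to reduce the theorem to the continuity of the atom at zero of the $p$-th spectral measure and then invoke a Lück-type logarithmic integrability argument. Since $\beta_p^{(2)}(\mu) = \nu_p^\mu(\{0\})$ by the remark preceding the theorem, the claim is $\nu_p^{\mu_n}(\{0\}) \to \nu_p^\mu(\{0\})$. Using soficity of each $\mu_n$ and a standard diagonal extraction on the approximating finite complexes, I first reduce to the case $\mu_n = \mu_{K_n}$ for finite simplicial complexes of uniformly bounded degree $D$, so $\mu$ itself is sofic with approximating sequence $(K_n)_{n\in\N}$.

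First I would establish weak convergence $\nu_p^{\mu_n} \to \nu_p^\mu$. All Laplace operators have norm at most $D^\ast := 2\sqrt{(p+2)D}$, so every $\nu_p^{\mu_n}$ is supported in $[0, D^\ast]$ and by the Weierstrass theorem it suffices to check moments. The $k$-th moment is
\[\trace_p^{\mu_n}(\Delta_p^k) = \int_{\SC} g_k([K,x])\, d\mu_n([K,x]),\]
where $g_k([K,x])$ is the sum of the diagonal entries $\langle \Delta_p^k \sigma, \sigma\rangle/(p+1)$ over $p$-simplices $\sigma$ containing the root. Since $\Delta_p^k \sigma$ involves only simplices within graph distance $k$ of $\sigma$, the function $g_k$ depends solely on the $(k+1)$-ball around the root, hence is bounded and continuous on $\SC$. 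Benjamini-Schramm convergence $\mu_n \to \mu$ therefore yields moment convergence and weak convergence of the spectral measures on the common compact support.

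The crux of the argument is uniform logarithmic integrability near zero:
\[\int_{(0, D^\ast]} |\log \lambda|\, d\nu_p^{\mu_n}(\lambda) \le C\]
for a constant $C$ depending only on $p$ and $D$. For a finite complex $K$, $\Delta_p$ is represented by a symmetric integer matrix, so its characteristic polynomial lies in $\Z[\lambda]$; factoring out the multiplicity of the eigenvalue $0$ leaves a polynomial with nonzero integer constant term, whence the product of the positive eigenvalues is at least $1$. Normalising by $|V(K)|$ gives $\int_{(0, D^\ast]} \log \lambda\, d\nu_p^{\mu_K} \ge 0$, and combining this with $\log_+ \lambda \le \log D^\ast$ and the uniform bound on the total mass $\nu_p^{\mu_K}(\R)$ yields the estimate for finite complexes. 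Applying it to an approximating sequence of finite complexes for each sofic $\mu_n$ and passing to the limit using the weak convergence from the previous step transfers the bound to $\nu_p^{\mu_n}$ uniformly in $n$. Chebyshev then gives $\nu_p^{\mu_n}((0,\epsilon]) \le C/|\log \epsilon|$ for all $\epsilon \in (0,1)$ and all $n$.

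To conclude, let $\chi_\epsilon \colon \R \to [0,1]$ be continuous with $\chi_\epsilon(0) = 1$ and $\mathrm{supp}(\chi_\epsilon) \subset [-\epsilon, \epsilon]$. Weak convergence of spectral measures gives $\int \chi_\epsilon\, d\nu_p^{\mu_n} \to \int \chi_\epsilon\, d\nu_p^\mu$. For the upper bound, $\mathbf{1}_{\{0\}} \le \chi_\epsilon$ yields $\limsup_n \nu_p^{\mu_n}(\{0\}) \le \nu_p^\mu([0, \epsilon]) \to \nu_p^\mu(\{0\})$ as $\epsilon \to 0$. For the lower bound, the integrability gives $\int \chi_\epsilon\, d\nu_p^{\mu_n} \le \nu_p^{\mu_n}(\{0\}) + C/|\log \epsilon|$, so $\nu_p^\mu(\{0\}) \le \int \chi_\epsilon\, d\nu_p^\mu \le \liminf_n \nu_p^{\mu_n}(\{0\}) + C/|\log \epsilon|$; letting $\epsilon \to 0$ concludes. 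The main obstacle is the logarithmic integrability: the integer-determinant trick for finite complexes and its passage to sofic measures through the established weak convergence of spectral measures is precisely what makes soficity, rather than mere unimodularity, the correct hypothesis for the theorem.
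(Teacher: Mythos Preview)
Your proof is correct and follows essentially the same route as the paper: moment matching gives weak convergence of spectral measures (the paper's Lemma~\ref{lemma-weak convergence}), the integer characteristic polynomial bounds the spectral mass near zero for finite complexes, this transfers to sofic measures by approximation (implicitly relying on Lemma~\ref{lemma-boundeddegreesequence} to keep the approximating complexes of degree at most $D$), and a semicontinuity argument closes the gap. Your phrasing via uniform logarithmic integrability plus Chebyshev, and your use of bump functions $\chi_\epsilon$ in place of the Portmanteau inequalities, are equivalent repackagings of the same estimate; note only that your opening diagonal reduction to finite $\mu_n=\mu_{K_n}$ makes the later ``transfer the bound to sofic $\mu_n$'' step redundant, though either device alone suffices.
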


Before we start with the proof we recall the following lemmas. The first one is a consequence of the approximation theorem of Weierstraß and the second one is known as \emph{Portmanteau theorem}:
\begin{lem}\label{lemma-compact support implies weak convergence}
Let $\mu$ be a Borel measure on $\R$ and let $(\mu_n)_{n\in\N}$ be a sequence of measures. Assume there is a compact set $C$ which contains the support of $\mu_n$ for every $n\in\N$ and assume further that 
\[\lim_{n\to\infty}\int_\mathbb{R}f(t)d\mu_n(t)=\int_\mathbb{R}f(t)d\mu(t)\]
holds for all polynomials $f\in \mathbb{R}[x]$. Then $\mu_n$ weakly converges to $\mu$.
\end{lem}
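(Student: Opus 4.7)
The plan is to reduce the weak convergence statement to the classical Weierstrass approximation theorem on a compact interval. Taking the constant polynomial $f\equiv 1$ in the hypothesis immediately gives $\mu_n(\R)\to\mu(\R)$, so in particular $\mu$ is a finite measure and the masses $\mu_n(\R)=\mu_n(C)$ are uniformly bounded by some constant $M_0$. Without loss of generality I would enlarge $C$ to assume $C\subseteq[-M,M]$ for some $M>0$.

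The first substantive step is to verify that $\mu$ is itself supported in $[-M,M]$. Suppose, for contradiction, that $\mu(\{|x|>M\})>0$; then there is $\delta>0$ with $c:=\mu(\{|x|\ge M+\delta\})>0$. For every even $k\in\N$,
\[\int_\R x^{k}\,d\mu\;\ge\;(M+\delta)^{k}\,c,\]
whereas the support hypothesis on $\mu_n$ yields $\int_\R x^{k}\,d\mu_n\le M^{k}M_0$ for every $n$. Combined with the hypothesised convergence of polynomial integrals, this forces $(M+\delta)^{k}c\le M^{k}M_0$ for every even $k$, a contradiction for $k$ sufficiently large. Hence $\mu$ is concentrated on $[-M,M]$ as well.

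The second step is the Weierstrass reduction. Fix a bounded continuous function $f\colon\R\to\R$ and $\varepsilon>0$. By the Weierstrass approximation theorem there exists a polynomial $p\in\R[x]$ with $\sup_{x\in[-M,M]}|f(x)-p(x)|<\varepsilon$. Since both $\mu_n$ and $\mu$ are concentrated on $[-M,M]$,
\[\Bigl|\int f\,d\mu_n-\int p\,d\mu_n\Bigr|\le \varepsilon M_0, \qquad \Bigl|\int f\,d\mu-\int p\,d\mu\Bigr|\le \varepsilon M_0,\]
and by hypothesis $\int p\,d\mu_n\to\int p\,d\mu$. The triangle inequality then yields
\[\limsup_{n\to\infty}\Bigl|\int f\,d\mu_n-\int f\,d\mu\Bigr|\le 2\varepsilon M_0,\]
and since $\varepsilon$ was arbitrary, the weak convergence $\mu_n\to\mu$ follows.

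The main obstacle is the first step. Without knowing a priori that $\mu$ is supported in a compact set, one cannot use uniform polynomial approximation of a bounded continuous $f$ on $\R$, because on the tail $\R\setminus[-M,M]$ the quantity $f-p$ need not be small and could spoil the estimate against $\mu$. The moment-growth argument above is the natural tool for this confinement, and it is the unique place in the proof where one really exploits that the hypothesis supplies convergence for \emph{all} polynomials rather than for some smaller dense subclass of $C_b(\R)$.
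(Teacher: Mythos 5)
Your proof is correct and follows the same route the paper intends: the paper offers no written proof of this lemma, remarking only that it ``is a consequence of the approximation theorem of Weierstra\ss'', and your argument is exactly that reduction carried out in full. The one genuinely nontrivial point --- that $\mu$ itself must be concentrated on $[-M,M]$ before uniform polynomial approximation can be applied --- is precisely the gap the paper glosses over, and your even-moment growth estimate $(M+\delta)^k c \le M^k M_0$ closes it correctly.
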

\begin{lem}\label{lemma- portmanteau}
Let $\nu_n$ be a sequence of finite Borel measures on $\mathbb{R}$ which weakly converge to $\nu$. Then
\begin{enumerate}
\item $\liminf_{n\to\infty} \nu_n(U)\ge \nu(U)$ for all open sets $U\subset\mathbb{R}$,
\item $\limsup_{n\to\infty} \nu_n(A)\le \nu(A)$ for all closed sets $A\subset \mathbb{R}$.
\end{enumerate}
\end{lem}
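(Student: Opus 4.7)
The plan is to prove the closed-set statement (2) first by upper-approximating the indicator of a closed set by bounded continuous functions, and then deduce the open-set statement (1) by passing to complements. One small subtlety is that $\nu_n$ and $\nu$ are only assumed to be finite, not probability measures, so I will need the additional fact that $\nu_n(\mathbb{R})\to\nu(\mathbb{R})$; this follows at once from weak convergence by testing against the constant function $1$, which is bounded and continuous.

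For (2), let $A\subset\mathbb{R}$ be closed and define, for $k\in\N$,
\[
f_k(x)\coloneqq\max\bigl\{0,\,1-k\cdot d(x,A)\bigr\},
\]
where $d(x,A)=\inf_{a\in A}|x-a|$. Each $f_k$ is $k$-Lipschitz (hence continuous), takes values in $[0,1]$, equals $1$ on $A$, and because $A$ is closed one has $f_k(x)\downarrow \mathbf{1}_A(x)$ pointwise on $\mathbb{R}$ as $k\to\infty$. From $\mathbf{1}_A\le f_k$ I get $\nu_n(A)\le\int f_k\,d\nu_n$, and weak convergence against the bounded continuous $f_k$ yields
\[
\limsup_{n\to\infty}\nu_n(A)\le\lim_{n\to\infty}\int f_k\,d\nu_n=\int f_k\,d\nu.
\]
Since $\nu$ is finite and $0\le f_k\le 1$, dominated convergence gives $\int f_k\,d\nu\downarrow\nu(A)$ as $k\to\infty$, which proves (2).

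For (1), set $A\coloneqq\mathbb{R}\setminus U$, which is closed, and note $\nu_n(U)=\nu_n(\mathbb{R})-\nu_n(A)$. Combining the convergence $\nu_n(\mathbb{R})\to\nu(\mathbb{R})$ established above with part (2) gives
\[
\liminf_{n\to\infty}\nu_n(U)=\nu(\mathbb{R})-\limsup_{n\to\infty}\nu_n(A)\ge\nu(\mathbb{R})-\nu(A)=\nu(U),
\]
as required.

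The argument is the textbook Portmanteau proof; there is no serious obstacle. The only point worth underlining is that (1) and (2) are not automatically equivalent for finite measures in the way they are for probability measures, so the test against the constant function $1$ used to obtain $\nu_n(\mathbb{R})\to\nu(\mathbb{R})$ is doing honest work in the reduction of (1) to (2). The rest is just the existence of the Urysohn-type sequence $f_k$ decreasing to $\mathbf{1}_A$, which is a standard feature of any metric space.
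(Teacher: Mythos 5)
Your proof is correct and is the standard Portmanteau argument. The paper itself states this lemma as a recalled classical fact and gives no proof at all, so there is nothing to compare against; your write-up (upper-approximating $\mathbf{1}_A$ by the Lipschitz functions $f_k$, passing to the limit in $k$ by dominated convergence, and reducing the open-set case to the closed-set case via $\nu_n(\mathbb{R})\to\nu(\mathbb{R})$, which you rightly note is not automatic for merely finite measures) correctly supplies the omitted details.
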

The following lemma is the first step of the proof of the Theorem:
\begin{lem}\label{lemma-weak convergence} For a weakly convergent sequence $\mu_n\to \mu$ of unimodular \rrsc\, with $\deg(\mu_n)\le D<\infty$ for all $n\in\N$, the corresponding spectral measures $\nu^{\mu_n}_p\to \nu^\mu_p$ weakly converge.
\end{lem}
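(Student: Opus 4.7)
The plan is to apply Lemma~\ref{lemma-compact support implies weak convergence}. Since $\deg(\mu_n)\le D$ uniformly (and hence $\deg(\mu)\le D$ as well, bounded degree being a closed condition preserved under weak limits on the closed subspace $\SC^D$), the earlier proposition on boundedness of the Laplace operator yields $\|\Delta_p\|\le 2\sqrt{(p+2)D}$ on each of $C_p^{(2)}(\SC,\mu_n)$ and on $C_p^{(2)}(\SC,\mu)$. Consequently all spectral measures $\nu_p^{\mu_n}$ and $\nu_p^\mu$ are supported in the common compact interval $[0,4(p+2)D]$. By Lemma~\ref{lemma-compact support implies weak convergence} it then suffices to establish polynomial-moment convergence
\[\int_\R t^k\,d\nu_p^{\mu_n}(t)\longrightarrow \int_\R t^k\,d\nu_p^\mu(t)\qquad \text{for every } k\in\N_0.\]

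Using the identity $\int_\R t^k\,d\nu_T(t)=\trace(T^k)$ furnished by the spectral theorem, followed by interchanging sum and integral in the definition of $\trace_p^{\mu_n}(\Delta_p^k)$, each side will be rewritten as the integral of a single $[K,x]$-indexed function, namely
\[\int_{\SC}F_k\,d\mu_n\quad \text{where}\quad F_k([K,x]):=\frac{1}{p+1}\sum_{\substack{\sigma\in (K,x)_\N(p)\\ 0\in\sigma}}\langle \Delta_p^k\,\delta_\sigma,\delta_\sigma\rangle_{(K,x)_\N}.\]
The task thereby reduces to verifying $\int F_k\,d\mu_n\to \int F_k\,d\mu$, which is exactly weak convergence $\mu_n\to \mu$ tested against $F_k$.

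The technical heart is to show that $F_k$ is a bounded continuous function on the compact subspace $\SC^D$ carrying all the measures in question. Continuity will be a locality statement about $\Delta_p^k$: a single application of $\Delta_p=\partial_{p-1}^*\partial_p+\partial_{p+1}\partial_{p+1}^*$ sends $\delta_\sigma$ to a chain supported on $p$-simplices that share either a codimension-one face or a codimension-one coface with $\sigma$, so that any vertex not already in $\sigma$ lies at graph distance at most $1$ from a vertex of $\sigma$. By induction, $\Delta_p^k\delta_\sigma$ is supported on $p$-simplices whose vertex sets lie within $B_{(K,x)}(k+1)$. Hence $F_k([K,x])$ depends only on the rooted isomorphism type of the ball $B_{(K,x)}(k+1)$ and is therefore continuous in the ball metric on $\SC$. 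Uniform boundedness on $\SC^D$ follows from $\|\Delta_p^k\|\le (2\sqrt{(p+2)D})^k$ together with the fact that the number of $p$-simplices containing any fixed vertex is bounded by a constant depending only on $p$ and $D$.

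The main obstacle will be this locality bookkeeping for $\Delta_p^k$ and the verification that $F_k$ meets the continuity and boundedness hypotheses of the definition of weak convergence; once in hand, one concludes $\int F_k\,d\mu_n\to \int F_k\,d\mu$ directly (extending $F_k$ continuously from the closed subspace $\SC^D$ to all of $\SC$ if necessary, e.g.\ via Tietze), which is precisely the required moment convergence and hence the lemma.
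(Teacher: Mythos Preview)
Your proposal is correct and follows essentially the same route as the paper: bound the spectrum uniformly via the degree bound, invoke Lemma~\ref{lemma-compact support implies weak convergence} to reduce to moment convergence, and then use the locality of $\Delta_p^k$ (it depends only on a ball of radius $k{+}1$ around the root) to recognise the $k$-th moment as the integral of a bounded function that is constant on $(k{+}1)$-neighbourhoods, so that weak convergence of $\mu_n$ applies directly. Your write-up is in fact a bit more careful than the paper's in spelling out why $F_k$ is bounded and continuous on $\SC^D$; the Tietze step is harmless but unnecessary, since all the measures live on the compact set $\SC^D$ and weak convergence there is already characterised by continuous functions on that subspace.
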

\begin{proof}
To relax the notation we denote the spectral measures $\nu_n\coloneqq\nu_p^{\mu_n}$ and $\nu\coloneqq\nu_p^{\mu}$. The norm of the Laplace operator is bounded by some constant $R(\mu)\in[0,\infty)$ which depends on the degree of the \rrsc. Since $\deg(\mu_n)$ is uniformly bounded, we know $\|\Delta_p\|_{\mu_n}\le R$ for all $n\in\mathbb{N}$ and hence the spectra of the Laplace operators $\Delta_p$ on $C_p^{(2)}(\SC,\mu_n)$ lie in $[-R,R]$ for all $n\in \mathbb{N}$. Moreover, by Lemma \ref{lemma-compact support implies weak convergence}, it is enough to show that\begin{align*}
\lim_{n\to\infty}\int_\mathbb{R}f(x)d\nu_n(x)=\int_\mathbb{R}f(x)d\nu(x)
\end{align*} 
holds for polynomials $f\in \R[x]$. By linearity we can restrict further to $f(x)=x^r$.
\begin{align*}
\int_{-R}^R \lambda^rd\nu_n(\lambda)&=\sum_{\tau\in \mathcal{T}_p}\int_{\SC}\int_{-R}^R \lambda^rd\langle E_p(\lambda)\tau,\tau\rangle d\mu_n/(p+1)\\
&=\sum_{\tau\in \mathcal{T}_p}\int_{\SC} \langle (\Delta_p)^r\tau,\tau\rangle d\mu_n/(p+1)
\end{align*}
The $r$-th power of $\Delta_p$ only depends on the ($r+1$)-neighbourhood of the root and the weak convergence of $\mu_n$ implies that
\[\lim_{n\to\infty}\mu_n(U_s(\alpha))= \mu(U_s(\alpha))\]
 for all $s$-balls $\alpha \in\mathcal{B}(s)$ and $s\in \mathbb{N}$, so the measures $\mu_n$ restricted to ($r+1$)-neighbourhoods converge and hence
\[\lim_{n\to \infty}\int_{-R}^R x^rd\nu_n=\int_{-R}^R x^rd\nu.\qedhere\]
\end{proof}
\begin{lem}\label{lemma-boundeddegreesequence} For a sofic \rrsc\; $\mu$ of degree bounded by some $D>0$ there is always a sequence of finite simplicial complexes $K_n$ with degree bounded by $D$ such that $\mu_{K_n}$ weakly converge to $\mu$. 
\end{lem}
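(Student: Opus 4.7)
The plan is to start from a sofic approximation $(L_n)$ of $\mu$, truncate each $L_n$ by removing the simplices that meet any vertex of degree exceeding $D$, and show that the resulting $K_n$ still Benjamini--Schramm converges to $\mu$. The conceptual reason this should work is that $\mu$ charges only complexes of maximal degree $\le D$, so around a uniformly chosen random vertex the $L_n$ already look locally like a degree-$\le D$ complex with probability tending to $1$; removing the \emph{truly} high-degree spots therefore costs a vanishing fraction of vertices and leaves the local picture unchanged.

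Concretely, I would proceed as follows. By soficity, fix $(L_n)_{n\in\N}$ of finite connected simplicial complexes with $\mu_{L_n}\to\mu$ weakly. Let $H_n=\{v\in V(L_n)\,|\,\deg_{L_n}(v)>D\}$ and let $K_n$ be the subcomplex of $L_n$ obtained by deleting every simplex that contains a vertex of $H_n$. Every vertex of $K_n$ lies outside $H_n$ and hence already had $L_n$-degree $\le D$; since taking a subcomplex only decreases degrees, $\deg_{\max}(K_n)\le D$. The $K_n$ may be disconnected, but this is permitted by Example \ref{example-measures}(1).

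The heart of the argument is to verify that $\mu_{K_n}\to\mu$ weakly on the generating sets $U_{[M,w]}(s)$. For each $s\in\N$, set
\[A_s=\bigl\{[K,x]\in\SC \,:\, \exists\,y\in V(B_{(K,x)}(s))\text{ with }\deg_K(y)>D\bigr\}.\]
This event is determined by the rooted isomorphism type of $B_{(K,x)}(s+1)$ (one needs one extra step to see all neighbours of vertices at distance $s$), hence $A_s$ is clopen in $\SC$. Because $\mu$ is supported on complexes of maximum degree $\le D$, $\mu(A_s)=0$; applying the Portmanteau theorem (Lemma \ref{lemma- portmanteau}) to the closed and open set $A_s$ yields $\mu_{L_n}(A_s)\to 0$. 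For any $v\in V(L_n)$ with $[L_n,v]\notin A_s$, every vertex of $B_{(L_n,v)}(s)$ lies outside $H_n$; therefore no simplex of $B_{(L_n,v)}(s)$ is removed when passing to $K_n$ and no distance from $v$ within that ball is altered, so $B_{(K_n,v)}(s)=B_{(L_n,v)}(s)$. Such ``good'' vertices are automatically in $V(K_n)$ and, by the preceding, make up a proportion of $V(L_n)$ tending to $1$, which in particular gives $|V(K_n)|/|V(L_n)|\to 1$.

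Putting these ingredients together, for every finite $[M,w]\in\SC$ and every $s\in\N$ the numerator $|\{v\in V(K_n):B_{(K_n,v)}(s)\cong[M,w]\}|$ and the analogous count for $L_n$ differ by at most $|V(L_n)|\,\mu_{L_n}(A_s)$, while the denominators $|V(K_n)|$ and $|V(L_n)|$ are asymptotically equal. Combined with the hypothesis $\mu_{L_n}(U_{[M,w]}(s))\to\mu(U_{[M,w]}(s))$, this gives $\mu_{K_n}(U_{[M,w]}(s))\to\mu(U_{[M,w]}(s))$, i.e.\ $\mu_{K_n}\to\mu$. The only real obstacle is the bookkeeping step of ensuring that the truncation does not corrupt the local picture around the surviving vertices; this is exactly the role of the buffer encoded by $A_s$, which is what forces one to control $\mu_{L_n}(A_s)$ rather than merely $\mu_{L_n}(A_0)$.
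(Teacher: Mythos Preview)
Your proof is correct and follows essentially the same strategy as the paper: start from a sofic approximation $(L_n)$, truncate to force degree $\le D$, and show that the proportion of vertices whose $s$-ball is affected tends to zero because $\mu$ gives zero mass to complexes with a high-degree vertex in the $s$-ball. Your use of the clopen set $A_s$ (controlled via Portmanteau) is exactly the paper's final estimate, and your observation that one needs the $(s+1)$-ball to detect $\deg_K(y)$ for boundary vertices $y$ is a point the paper glosses over.

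The only notable difference is the truncation step: the paper removes all edges incident to a high-degree vertex (and higher simplices containing them) but \emph{keeps} those vertices as isolated points, so $V(K_n)=V(L_n)$ and the denominators match exactly; you instead delete the high-degree vertices entirely, which forces the extra (easy) step $|V(K_n)|/|V(L_n)|\to 1$. Either variant works. Your bound on the numerator difference by $|V(L_n)|\,\mu_{L_n}(A_s)$ is correct: good vertices contribute identically to both counts, and the bad vertices for $L_n$ and for $K_n$ together form exactly the complement of the good set.
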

\begin{proof}
Since $\mu$ is sofic, there exists a sequence $L_n$ of finite simplicial complexes such that $\mu_{L_n}$ weakly converge to $\mu$. This is equivalent to $\mu_{L_n}(U_r(\alpha))\to \mu(U_r(\alpha))$ for all rooted r-balls $\alpha\in\mathcal{B}(r)$ and $r\in \N$. We remove edges of $L_n$ which contain at least one vertex of degree greater then $D$, and all higher simplices containing these edges, until the maximum degree is at most $D$ and denote the new sequence by $K_n$.  We consider the possible kinds of $r$-balls and vertices:
\begin{enumerate}
\item Let $\alpha\in \mathcal{B}(r)$ such that there exists a vertex $x\in V(\alpha)$ with $\deg(x)>D$. In this case we know that $\mu(U_r(\alpha))=0$, since $\deg(\mu)\le D$ and hence $\mu_{L_n}(U_r(\alpha))\to 0$. By construction, we also have that $\mu_{K_n}(U_r(\alpha))= 0$. 
\item Suppose all vertices of $\alpha$ have degree at most $D$. If 
\[(B_{(L_n,x)}(r),x)\cong \alpha,\] there are no simplices in this $r$-ball of $L_n$ to remove and also 
\[(B_{(K_n,x)}(r),x)\cong \alpha.\]
\item The last case that remains is 
\[(B_{(L_n,x)}(r),x)\not\cong \alpha\text{ but }(B_{(K_n,x)}(r),x)\cong \alpha.\] Thus, there must be a vertex $y\in V(B_{(L_n,x)}(r))$ with $\deg(y)>D$. 
\end{enumerate}
Consequently we have for every $r$-ball $\alpha$ and $r\in\N$:
\begin{align*}
&\mu_{L_n}(U_r(\alpha))-\mu_{K_n}(U_r(\alpha))\\
\leq& \mu_{L_n}\left(\left\lbrace U_r(\beta)\;\middle|\begin{array}{c}
\beta\in \mathcal{B}(r) \text{ and } \exists y\in V(\beta)\\ \text{ with } \deg(y)>D \end{array}\right\rbrace\right) \\
= &1-\mu_{L_n}\left(\left\lbrace U_r(\beta)\;\middle|\begin{array}{c}
\beta\in \mathcal{B}(r) \text{ and }  \deg(y)\le D\;\forall y\in V(\beta) \end{array}\right\rbrace\right)\\
\xlongrightarrow{n\to\infty} &1-\mu\left(\left\lbrace U_r(\beta)\;\middle|\begin{array}{c}
\beta\in \mathcal{B}(r) \text{ and }  \deg(y)\le D\;\forall y\in V(\beta) \end{array}\right\rbrace\right)\\=&1-1=0.\\
\Longrightarrow& \mu_{K_n}(U_r(\alpha))\to\mu(U_r(\alpha)).\qedhere
\end{align*}
\end{proof}
We complete the proof of the Theorem:
\begin{proof}
By Lemma \ref{lemma-weak convergence} we know that the spectral measures $\nu_n$ of the sofic bounded degree \rrsc s $\mu_n$ weakly converge to the spectral  measure $\nu$ of $\mu$. First assume $\mu$ is an unimodular \rrsc\, associated with a finite simplicial complex $K$ with vertex degree at most $D$. We pick a basis for $C_p^{(2)}(K)$ by choosing an orientation for every $p$-simplex. Hence we can write the Laplace operator $\Delta_p$ on $C_p^{(2)}(K)$ as a $d\times d$ matrix with integral coefficients, where $d$ is the number of $p$-simplices in $K$. Thus, the characteristic polynomial $q(x)$ of $\Delta_p$ is in $\mathbb{Z}[x]$ and of degree $d$. The product of the non-zero roots of $q(x)$, which are all in $[-\|\Delta_p\|,\|\Delta_p\|]$, 
is one of the coefficients of $q$, so it is in $\mathbb{Z}\backslash \{0\}$ and hence we get
\begin{equation}\label{equation-productgreaterone}
1\le \Pi_{\lambda_i\neq 0}|\lambda_i|.
\end{equation}
The spectral measure of any Borel set $S\subset\mathbb{R}$ has the following form (with $v_i$ the normed eigenvector of $\lambda_i$):
\begin{align*}
\nu(S)&=\sum_{\tau\in\mathcal{T}_p}\int_{\SC} \langle E_{\Delta_p}(S)\tau,\tau\rangle d\mu/(p+1)=\sum_{\sigma\in K(p)}\langle E_{\Delta_p}^K(S)\sigma,\sigma\rangle/|V(K)|\\
&=\sum_{\sigma\in K(p)}\sum_{\lambda_i\in S}\langle \langle\sigma,v_i\rangle v_i,\sigma\rangle/|V(K)|=\sum_{\lambda_i\in S}\frac{\|v_i\|}{|V(K)|}=\frac{|\{{\lambda_i\in S}\}|}{|V(K)|}.
\end{align*}
The second equality is the same computation as in Example \ref{example-betti-numbers}. Thence, we can express the number of eigenvalues in the set $S$ by 
\begin{equation}\label{equiation-numberofeigenvalues}
|\{i\mid\lambda_i \in S\}|=\nu(S)|V(K)|.
\end{equation}
Let $I_0=(-\epsilon,\epsilon)\backslash \{0\}$ for some $1>\epsilon>0$. By Equation \ref{equation-productgreaterone} and \ref{equiation-numberofeigenvalues} we get the following inequality:
\[1\le \Pi_{\lambda_i\neq 0}|\lambda_i|\le \epsilon^{|V(K)|\nu(I_0)}\|\Delta_p\|^d,\]
where we estimated the eigenvalues in $I_0$ by $\epsilon$. Therefore we get
\begin{align*}
0&\le -|V(K)|\nu(I_0)\ln(1/\epsilon) + d \ln(\|\Delta_p\|)\\
\nu(I_0)&\le \frac{d\ln(\|\Delta_p\|)}{|V(K)|\ln(1/\epsilon)}.
\end{align*}
The maximal number of $p$-simplices in a complex with vertex degree bounded by  some $D\in\N$ is $\binom{D}{p}\frac{|V(K)|}{p+1}$, consequently we can make the right side independent of the number of vertices $|V(K)|$:
\[\nu(I_0)\le \frac{\ln(\|\Delta_p\|)\binom{D}{p}}{(p+1)\ln(1/\epsilon)}.\]
Moreover, $\|\Delta_p\|$ is bounded by some number $R(D)$ only depending on the vertex degree $D$. Up to now we looked at an \rrsc\, associated to a finite simplicial complex, but we get the same inequality for any sofic measure $\mu$ of degree bounded by $D$, since by Lemma \ref{lemma-boundeddegreesequence} we know it is the limit of a sequence $\mu_{K_n}$ of finite simplicial complexes, all of degree bounded by $D$. We denote the associated spectral measures by $\nu$ respective $\nu_n$ and apply Lemma \ref{lemma- portmanteau}:
\[\nu(I_0)\leq \liminf_{n\to\infty} \nu_n(I_0)\le \frac{\ln(R(D))\binom{D}{p}}{(p+1)\ln(1/\epsilon)}. \]
After we showed this estimate for every sofic \rrsc\, of bounded degree, we can finish the proof. First, we restate the assumptions of the Theorem. Let $\nu_n$ be a sequence of spectral measures associated to a sequence of sofic \rrsc s $\mu_n$ of degree bounded by $D$ which weakly converge to $\mu$ and let $\nu$ be the spectral measure of $\mu$. We conclude:
\begin{align*}
\limsup_{n\to\infty}\nu_n(\{0\})&\le \nu(\{0\})\le \nu((-\epsilon,\epsilon))\le \liminf_{n\to \infty}\nu_n(-\epsilon,\epsilon)\\
&\le \liminf_{n\to \infty}\nu_n(\{0\})+ \frac{\ln(R(D))\binom{D}{p}}{(p+1)\ln(1/\epsilon)}
\end{align*}
Letting $\epsilon$ tend to zero, completes the proof.
\end{proof}
\begin{cor}[Euler-Poincaré Formula]
Let $\mu$ be a sofic random rooted simplicial complex of bounded degree and with dimension $n$. Then
\begin{align*}
\sum_{p=0}^n(-1)^p \beta_p^{(2)}(\mu)=\sum_{p=0}^n(-1)^p\frac{\mathbb{E}(\deg_p)}{p+1},
\end{align*}
where $\mathbb{E}(\deg_p)$ denotes the expected number of $p$-simplices containing the root.
\end{cor}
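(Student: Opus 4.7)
The plan is to reduce to the classical Euler--Poincar\'e identity for finite simplicial complexes and then transfer it to the sofic setting using the approximation theorem together with weak convergence of expected degrees.

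First I would handle the finite case. For a finite simplicial complex $L$ of dimension at most $n$, Example \ref{example-betti-numbers}(1) gives $\beta_p^{(2)}(\mu_L)=b_p(L)/|V(L)|$, and the earlier computation of $\trace_p^\mu(\id_p)$ shows $\mathbb{E}_{\mu_L}(\deg_p)/(p+1)=|L(p)|/|V(L)|$. Dividing the ordinary Euler--Poincar\'e identity $\sum_{p=0}^n(-1)^pb_p(L)=\sum_{p=0}^n(-1)^p|L(p)|$ by $|V(L)|$ therefore settles the claim for $\mu_L$.

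For a general sofic $\mu$ of degree $D\coloneqq\deg(\mu)<\infty$, I would invoke Lemma \ref{lemma-boundeddegreesequence} to obtain finite complexes $K_m$ with $\deg(\mu_{K_m})\le D$ and $\mu_{K_m}\to\mu$ Benjamini--Schramm. The main approximation theorem (Theorem \ref{theorem-approximation}) immediately gives $\beta_p^{(2)}(\mu_{K_m})\to\beta_p^{(2)}(\mu)$ for each $p$. For the right-hand side I would observe that $[K,x]\mapsto\deg_p(x)$ factors through the $1$-ball at the root and is bounded by $\binom{D}{p}$ on the compact subspace $\SC^D$ containing the support of every $\mu_{K_m}$ and of $\mu$; hence it is a bounded continuous function on that subspace, and weak convergence yields $\mathbb{E}_{\mu_{K_m}}(\deg_p)\to\mathbb{E}_\mu(\deg_p)$. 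Because each $K_m$ has vertex degree at most $D$, it has dimension at most $D$, so the alternating sums over the $K_m$ have uniformly bounded length and passage to the limit is term by term.

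The main obstacle is a small bookkeeping issue: the $K_m$ may contain simplices in dimensions $p$ with $n<p\le D$, whereas the formula only sums up to $n$. I would handle this by noting that $\mathbb{E}_\mu(\deg_p)=0$ for $p>n$ (since $\mu$ is supported on complexes of dimension $\le n$) together with the trivial bound $b_p(K_m)\le|K_m(p)|$, which gives $\beta_p^{(2)}(\mu_{K_m})\le \mathbb{E}_{\mu_{K_m}}(\deg_p)/(p+1)\to 0$. Consequently the high-dimensional contributions vanish in the limit on both sides, only the terms with $p\le n$ survive, and the finite-complex identity applied to each $K_m$ passes to the stated formula for $\mu$.
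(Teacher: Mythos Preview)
Your proof is correct and follows the same approach as the paper: establish the identity for finite complexes via the classical Euler--Poincar\'e formula and Example~\ref{example-betti-numbers}(1), then pass to the sofic limit using the approximation theorem. In fact you are more careful than the paper, which simply says ``applying our main theorem yields the equality'' without spelling out the convergence of the right-hand side or the dimension bookkeeping; your observation that $[K,x]\mapsto\deg_p(x)$ is locally constant and bounded on $\SC^D$, together with your treatment of the possible excess dimensions of the approximating $K_m$, fills in exactly the details the paper omits.
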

\begin{proof}
For a random rooted simplicial complex $\mu_K$, associated with a finite simplicial complex $K$, we have by Example \ref{example-betti-numbers} (1) that $\beta^{(2)}_p(\mu_K)=\frac{b_p(K)}{|K^{(0)}|}$ and therefore
\begin{align*}
\sum_{p=0}^n(-1)^p \beta_p^{(2)}(\mu)&=\sum_{p=0}^n(-1)^p\frac{b_p(K)}{|K^{(0)}|}=\frac{\chi(K)}{|K^{(0)}|}=\sum_{p=0}^n(-1)^p\frac{|K(p)|}{|K^{(0)}|}\\
&=\sum_{p=0}^n(-1)^p\int_{\SC} \frac{\deg_p(x)}{p+1}d\mu_K=\sum_{p=0}^n(-1)^p \frac{\mathbb{E}_{\mu_K}(\deg_p)}{p+1},
\end{align*}
where we denote by $\chi(K)$ the Euler characteristic of $K$. Applying our main theorem yields the equality for sofic random rooted simplcial complexes of bounded degree.
\end{proof}
\begin{bibdiv}
\begin{biblist}
\bib{7s}{article}{
   author={Abert, Miklos},
   author={Bergeron, Nicolas},
   author={Biringer, Ian},
   author={Gelander, Tsachik},
   author={Nikolov, Nikolay},
   author={Raimbault, Jean},
   author={Samet, Iddo},
   title={On the growth of $L^2$-invariants for sequences of lattices in Lie
   groups},
   journal={Ann. of Math. (2)},
   volume={185},
   date={2017},
   number={3},
   pages={711--790},
   issn={0003-486X},
   review={\MR{3664810}},
   doi={10.4007/annals.2017.185.3.1},
}

\bib{abert-thom-virag}{article}{
   author = {Abért, M.},
   author = {Thom, A.},
   author = {Virág, B.},
    title = {Benjamini-Schramm convergence and pointwise convergence of the spectral measures},
  journal = {preprint},
     year = {2011},
   eprint = {https://users.renyi.hu/~abert/luckapprox.pdf},
}
\bib{aldous-lyons}{article}{
   AUTHOR = {Aldous, David},
   author = {Lyons, Russell},
     TITLE = {Processes on unimodular random networks},
   JOURNAL = {Electron. J. Probab.},
  FJOURNAL = {Electronic Journal of Probability},
    VOLUME = {12},
      YEAR = {2007},
     PAGES = {no. 54, 1454--1508},
      ISSN = {1083-6489},
   MRCLASS = {60C05 (05C80 60G50)},
  MRNUMBER = {2354165},
MRREVIEWER = {Jean-Fran\c{c}ois Delmas},
       DOI = {10.1214/EJP.v12-463},
       URL = {https://doi.org/10.1214/EJP.v12-463},
}
\bib{anne}{article}{
  TITLE = {{The Gau{\ss}-Bonnet operator of an infinite graph}},
  AUTHOR = {{Ann{\'e}}, Colette},
  author = {{Torki-Hamza}, Nabila},
  journal={Analysis and Mathematical Physics},
year={2015},
volume={5},
number={2},
pages={137--159},
issn={1664-235X},
doi={10.1007/s13324-014-0090-0},
%url="https://doi.org/10.1007/s13324-014-0090-0"
}

\bib{benjamini-schramm}{article}{
    AUTHOR = {Benjamini, Itai},
    author =  {Schramm, Oded},
     TITLE = {Recurrence of distributional limits of finite planar graphs},
   JOURNAL = {Electron. J. Probab.},
  FJOURNAL = {Electronic Journal of Probability},
    VOLUME = {6},
      YEAR = {2001},
     PAGES = {no. 23, 13},
      ISSN = {1083-6489},
   MRCLASS = {82B41 (05C80 52C26 60G50)},
  MRNUMBER = {1873300},
MRREVIEWER = {Olle H\"aggstr\"om},
      % DOI = {10.1214/EJP.v6-96},
       URL = {https://doi.org/10.1214/EJP.v6-96},
}
\bib{bordenave}{article}{
author = {Bordenave, Charles},
title = {Spectrum of random graphs},
eprint = {https://www.math.univ-toulouse.fr/~bordenave/coursSRG.pdf}}

\bib{bowen}{article}{
   AUTHOR = {Bowen, Lewis},
     TITLE = {Cheeger constants and {$L^2$}-{B}etti numbers},
   JOURNAL = {Duke Math. J.},
  FJOURNAL = {Duke Mathematical Journal},
    VOLUME = {164},
      YEAR = {2015},
    NUMBER = {3},
     PAGES = {569--615},
      ISSN = {0012-7094},
   MRCLASS = {57S30 (20E26 58J50)},
  MRNUMBER = {3314481},
MRREVIEWER = {Shiping Liu},
       DOI = {10.1215/00127094-2871415},
       URL = {https://doi.org/10.1215/00127094-2871415},
}
\bib{carderi}{article}{
       author = {{Carderi}, Alessandro},
       author = {{Gaboriau}, Damien},
       author = {{de la Salle}, Mikael},
        title = {{Non-standard limits of graphs and some orbit equivalence invariants}},
      journal = {arXiv e-prints},
     keywords = {Mathematics - Group Theory, Mathematics - Dynamical Systems, Mathematics - Operator Algebras},
         year = {2018"},
%        month = "Dec",
%          eid = {arXiv:1812.00704},
%        pages = {arXiv:1812.00704},
archivePrefix = {arXiv},
       eprint = {1812.00704},
% primaryClass = {math.GR},
%       adsurl = {https://ui.adsabs.harvard.edu/abs/2018arXiv181200704C},
%      adsnote = {Provided by the SAO/NASA Astrophysics Data System}
}

\bib{chebbi}{article}{
   author = {{Chebbi}, Y.},
    title = {The discrete Laplacian of a 2-simplicial complex},
  journal = {ArXiv e-prints},
   eprint = {1802.08422},
 keywords = {Mathematics - Spectral Theory, Mathematics - Functional Analysis},
     year = {2018},
    month = {feb},
   adsurl = {http://adsabs.harvard.edu/abs/2018arXiv180208422C},
  adsnote = {Provided by the SAO/NASA Astrophysics Data System}
}

\bib{dixmier}{book}{
   author = {{Dixmier}, Jacques},
    title = {Von Neumann Algebras},
  publisher = {North-Holland Publishing Company},
     year = {1981},    
}
\bib{dixon}{article}{
author = {Dixon, P. G.},
title = {Unbounded Operator Algebras},
journal = {Proceedings of the London Mathematical Society},
volume = {s3-23},
number = {1},
pages = {53-69},
doi = {10.1112/plms/s3-23.1.53},
%url = {https://londmathsoc.onlinelibrary.wiley.com/doi/abs/10.1112/plms/s3-23.1.53},
%eprint = {https://londmathsoc.onlinelibrary.wiley.com/doi/pdf/10.1112/plms/s3-23.1.53}
}
\bib{dykema}{article}{
   author = {{Dykema}, K.},
	author = {{Noles}, J.},
	author = {{Sukochev}, F},
	author = {{Zanin}, D.},
    title = {On reduction theory and Brown measure for closed unbounded operators},
  journal = {ArXiv e-prints},
   eprint = {1509.03362},
 keywords = {Mathematics - Operator Algebras, 46C99},
     year = {2015},
    month = {sep},
   adsurl = {http://adsabs.harvard.edu/abs/2015arXiv150903362D},
  adsnote = {Provided by the SAO/NASA Astrophysics Data System}
}

\bib{eckmann}{article}{
author={Eckmann, Beno},
title= {Introduction to $\ell^2$-methods in topology: Reduced $\ell^2$-homology, harmonic chains, $\ell^2$-betti numbers},
journal={Israel Journal of Mathematics},
year={2000},
volume={117},
number={1},
pages={183--219},
issn={1565-8511},
doi={10.1007/BF02773570},
%url="https://doi.org/10.1007/BF02773570"
}

\bib{elek}{inproceedings}{
author={Elek, G{\'a}bor},
%editor={Katona, Gyula O. H.
%and Schrijver, Alexander
%and Sz{\H{o}}nyi, Tam{\'a}s
%and S{\'a}gi, G{\'a}bor"}
title={Betti Numbers are Testable*},
booktitle={Fete of Combinatorics and Computer Science},
year={2010},
publisher={Springer Berlin Heidelberg},
address={Berlin, Heidelberg},
pages={139--149},
isbn={978-3-642-13580-4},
doi={10.1007/978-3-642-13580-46},
}

\bib{elek-szabo}{article}{
author={Elek, G{\'a}bor},
author={Szab{\'o}, Endre},
title={Hyperlinearity, essentially free actions and $L^2$-invariants. The sofic property},
journal={Mathematische Annalen},
year={2005},
%month="Jun",
%day="01",
volume={332},
number={2},
pages={421--441},
%abstract="We prove that Connes' Embedding Conjecture holds for the von Neumann algebras of sofic groups, that is sofic groups are hyperlinear. Hence we provide some new examples of hyperlinearity. We also show that the Determinant Conjecture holds for sofic groups as well. We introduce the notion of essentially free actions and amenable actions and study their properties.",
issn={1432-1807},
doi={10.1007/s00208-005-0640-8},
%url="https://doi.org/10.1007/s00208-005-0640-8"
}

\bib{farber}{article}{
author={Farber, Michael},
title={Geometry of growth: approximation theorems for $L^2$-invariants},
journal={Mathematische Annalen},
%year="1998",
%month="Jun",
%day="01",
volume={311},
number={2},
pages={335--375},
issn={1432-1807},
doi={10.1007/s002080050190},
%url="https://doi.org/10.1007/s002080050190"
}

\bib{feldman-mooreI}{article}{
    AUTHOR = {Feldman, Jacob},
    author = {Moore, Calvin C.},
     TITLE = {Ergodic equivalence relations, cohomology, and von {N}eumann
              algebras. {I}},
   JOURNAL = {Trans. Amer. Math. Soc.},
  FJOURNAL = {Transactions of the American Mathematical Society},
    VOLUME = {234},
      YEAR = {1977},
    NUMBER = {2},
     PAGES = {289--324},
      ISSN = {0002-9947},
   MRCLASS = {22D40 (28A65 46L10)},
  MRNUMBER = {0578656},
       DOI = {10.2307/1997924},
       URL = {https://doi.org/10.2307/1997924},
}
\bib{feldman-mooreII}{article} {
    AUTHOR = {Feldman, Jacob},
    author = {Moore, Calvin C.},
     TITLE = {Ergodic equivalence relations, cohomology, and von {N}eumann
              algebras. {II}},
   JOURNAL = {Trans. Amer. Math. Soc.},
  FJOURNAL = {Transactions of the American Mathematical Society},
    VOLUME = {234},
      YEAR = {1977},
    NUMBER = {2},
     PAGES = {325--359},
      ISSN = {0002-9947},
   MRCLASS = {22D40 (28A65 46L10)},
  MRNUMBER = {0578730},
       DOI = {10.2307/1997925},
       URL = {https://doi.org/10.2307/1997925},
}

\bib{gaboriau}{article}{
author = {Gaboriau, Damien},
journal = {Publications Mathématiques de l'IHÉS},
keywords = {measure preserving action; equivalence relation; -Betti numbers; classification},
language = {fre},
pages = {93-150},
publisher = {Institut des Hautes Etudes Scientifiques},
title = {Invariants l2 de relations d’équivalence et de groupes},
url = {http://eudml.org/doc/104184},
volume = {95},
year = {2002},
}
\bib{gaboriau-percolation}{article}{
    AUTHOR = {Gaboriau, D.},
     TITLE = {Invariant percolation and harmonic {D}irichlet functions},
   JOURNAL = {Geom. Funct. Anal.},
  FJOURNAL = {Geometric and Functional Analysis},
    VOLUME = {15},
      YEAR = {2005},
    NUMBER = {5},
     PAGES = {1004--1051},
      ISSN = {1016-443X},
   MRCLASS = {60K35 (37A20)},
  MRNUMBER = {2221157},
MRREVIEWER = {Filippo Cesi},
       DOI = {10.1007/s00039-005-0539-2},
       URL = {https://doi.org/10.1007/s00039-005-0539-2},
}
\bib{kahle}{article}{
    AUTHOR = {Kahle, Matthew},
     TITLE = {Topology of random clique complexes},
   JOURNAL = {Discrete Math.},
  FJOURNAL = {Discrete Mathematics},
    VOLUME = {309},
      YEAR = {2009},
    NUMBER = {6},
     PAGES = {1658--1671},
      ISSN = {0012-365X},
   MRCLASS = {05C80 (05C10 58E05)},
  MRNUMBER = {2510573},
MRREVIEWER = {Klas Markstr\"om},
      DOI = {10.1016/j.disc.2008.02.037},
       URL = {https://doi.org/10.1016/j.disc.2008.02.037},
}

\bib{linial-meshulam}{article}{
    AUTHOR = {Linial, Nathan},
    author = {Meshulam, Roy},
     TITLE = {Homological connectivity of random 2-complexes},
   JOURNAL = {Combinatorica},
  FJOURNAL = {Combinatorica. An International Journal on Combinatorics and
              the Theory of Computing},
    VOLUME = {26},
      YEAR = {2006},
    NUMBER = {4},
     PAGES = {475--487},
      ISSN = {0209-9683},
   MRCLASS = {55M99 (05C80 55U35)},
  MRNUMBER = {2260850},
MRREVIEWER = {Igor Rivin},
       DOI = {10.1007/s00493-006-0027-9},
       URL = {https://doi.org/10.1007/s00493-006-0027-9},
}
\bib{linial-peled}{article}{
    Author = {Linial, Nathan},
    author = {Peled, Yuval},
    Title = {{On the phase transition in random simplicial complexes.}},
    FJournal = {{Annals of Mathematics. Second Series}},
    Journal = {{Ann. Math. (2)}},
    ISSN = {0003-486X; 1939-8980/e},
    Volume = {184},
    Number = {3},
    Pages = {745--773},
    Year = {2016},
    Publisher = {Princeton University, Mathematics Department, Princeton, NJ; Mathematical Sciences Publishers (MSP), Berkeley, CA},
    Language = {English},
    DOI = {10.4007/annals.2016.184.3.3},
   % MSC2010 = {05C80 05E45 60C05},
    Zbl = {1348.05193}}

\bib{lueck}{article}{
    AUTHOR = {L\"uck, W.},
     TITLE = {Approximating {$L^2$}-invariants by their finite-dimensional
              analogues},
   JOURNAL = {Geom. Funct. Anal.},
  FJOURNAL = {Geometric and Functional Analysis},
    VOLUME = {4},
      YEAR = {1994},
    NUMBER = {4},
     PAGES = {455--481},
      ISSN = {1016-443X},
   MRCLASS = {58G12 (46L85 57Q10)},
  MRNUMBER = {1280122},
MRREVIEWER = {Peter Haskell},
       DOI = {10.1007/BF01896404},
       URL = {https://doi.org/10.1007/BF01896404},
}

\bib{nielsen}{book}{
   author = {{Nielsen}, Ole A.},
    title = {Direct integral theory},
    publisher = {Marcel Dekker}  
  volume = {61}
     year = {1980},    
}
\bib{nussbaum}{article}{
author = {{Nussbaum}, A. E.},
doi = {10.1215/S0012-7094-64-03103-5},
fjournal = {Duke Mathematical Journal},
journal = {Duke Math. J.},
number = {1},
pages = {33--44},
publisher = {Duke University Press},
title = {Reduction theory for unbounded closed operators in Hilbert space},
url = {https://doi.org/10.1215/S0012-7094-64-03103-5},
volume = {31},
year = {1964}
}

\bib{penrose}{book}{
    AUTHOR = {Penrose, Mathew},
     TITLE = {Random geometric graphs},
    SERIES = {Oxford Studies in Probability},
    VOLUME = {5},
 PUBLISHER = {Oxford University Press, Oxford},
      YEAR = {2003},
     PAGES = {xiv+330},
      ISBN = {0-19-850626-0},
   MRCLASS = {60-02 (05C80 60D05)},
  MRNUMBER = {1986198},
MRREVIEWER = {Ilya S. Molchanov},
       DOI = {10.1093/acprof:oso/9780198506263.001.0001},
       URL = {https://doi.org/10.1093/acprof:oso/9780198506263.001.0001},
}
\bib{petersen-sauer-thom}{article}{
   author={Petersen, Henrik Densing},
   author={Sauer, Roman},
   author={Thom, Andreas},
   title={$L^2$-Betti numbers of totally disconnected groups and their
   approximation by Betti numbers of lattices},
   journal={J. Topol.},
   volume={11},
   date={2018},
   number={1},
   pages={257--282},
   issn={1753-8416},
   review={\MR{3784232}},
   doi={10.1112/topo.12056},
}
\bib{wojciechowski}{thesis}{
   author = {{Wojciechowski}, R.~K.},
    title = {Stochastic Completeness of Graphs},
 keywords = {Mathematics - Spectral Theory, Mathematics - Differential Geometry},
   school = {PhD Thesis},
     year = {2007},
    eprint ={https://arxiv.org/abs/0712.1570}
   
}

\end{biblist}
\end{bibdiv}
\end{document}